\tikzset{vertex/.style={circle,fill,draw,scale=.2}}
\newtheorem{theorem}{Theorem}
\newtheorem{corollary}[theorem]{Corollary}
\newtheorem{example}[theorem]{Example}
\newtheorem{lemma}[theorem]{Lemma}
\newtheorem{proposition}[theorem]{Proposition}
\begin{document}
\title{An introduction to a supersymmetric graph algebra}
\subjclass[2010]{17A70, 05E15}
\keywords{Superalgebras, Leavitt path algebras, line defect}
\author{Katherine Radler}
\address{Department of Mathematics and Statistics, St. Louis University, St.
Louis, MO-63103, USA} 
\email{katie.radler@slu.edu}
\author{Ashish K. Srivastava}
\address{Department of Mathematics and Statistics, St. Louis University, St.
Louis, MO-63103, USA} 
\email{ashish.srivastava@slu.edu}
\thanks{The work of the second author is partially supported by a grant from Simons Foundation (grant number 426367).}
\dedicatory{Dedicated to the loving memory of Anupam Srivastava}
\begin{abstract}
In this paper we propose a graph superalgebra which is the supersymmetric analogue of Leavitt path algebras. We find a basis for these superalgebras and characterize when they have polynomial growth.  
\end{abstract}

\maketitle

\bigskip

\section{Introduction}

\bigskip

\noindent The objective behind this paper is to introduce the notion of a graph superalgebra which is the supersymmetric analogue of Leavitt path algebras. The inspiration comes from \cite{C} where Cirafici associates the line defect with a certain path on the quiver and gives rules to compute its framed spectrum. It is known that limit points of the compatification of Teichm\"{u}ller spaces correspond to tropical solutions of the Ptolemy relations. Integral limit points correspond to simple closed curves, but this only gives countably many limit points. The real limit points correspond to laminations. In \cite{C}, lamination on a triangulated  curve $C$ is described algebraically on the extended BPS quiver $\overline{Q}$, where additional vertices are inserted that correspond to boundary segments. We take the example below from \cite{C}. 

Consider the following operator in the Argyres-Douglas $A_2$ theory, drawn as paths on the extended BPS quiver. 

\begin{center}
\begin{tikzpicture}

\node[vertex, fill = green] (u1g) at (-2, 1) {};
\node (u1) at (-2.05, 1) {}; 
\node[vertex, fill = yellow] (u1y) at (-2.1, 1) {}; 
\node[vertex, fill = yellow] (u2y) at (2, 1) {}; 
\node (u2) at (2.05, 1) {}; 
\node[vertex, fill = green] (u2g) at (2.1, 1) {};
\node[vertex] (v1) at (-1,0) {}; 
\node[vertex] (v2) at (1,0) {}; 
\node[vertex, fill = green] (w1g) at (-3.1, -1) {}; 
\node (w1) at (-3.05, -1) {}; 
\node[vertex, fill = yellow] (w1y) at (-3, -1) {}; 
\node[vertex, fill = green] (w2g) at (0.1, -1) {}; 
\node (w2) at (0.05, -1) {}; 
\node[vertex, fill = yellow] (w2y) at (0,-1) {}; 
\node[vertex, fill = green] (w3g) at (3, -1) {}; 
\node (w3) at (3.05, -1) {}; 
\node[vertex, fill = yellow] (w3y) at (3.1, -1) {};  
\path[->] (v1) edge (u1);
\path[->] (w1) edge (v1); 
\path[->] (v1) edge (w2); 
\path[->] (w2) edge (v2); 
\path[->] (v2) edge (v1); 
\path[->] (u2) edge (v2); 
\path[->] (v2) edge (w3); 
\node (L1) at (-4, .5) {$L_1$};
\node (g2) at (-2.5, 0) {$\gamma_2$}; 
\path[-, color = blue] (1.75, 1) edge (-.25, -1); 
\path[-, color = blue] (2.25, 1) edge (.25, -1); 
\node (1) at (1.25, .8) {$+1$};
\node (12) at (2.25, .5) {$+1$}; 
\node (g1) at (1, -.6) {$\gamma_1$}; 
\end{tikzpicture}
\end{center} 

\noindent Here the boundary vertices are split in green and yellow vertices corresponding to two stokes sectors on the boundary segment separated by a special point. The paths in $L_1$ are given by formal strings of elements  
\[ v_g e_{G\gamma_1} v_{\gamma_1} e^*_{y \gamma_1} v_y \] 
\[ v_\gamma e_{y \gamma_1} v_{\gamma_1} e^*_{G\gamma_1} v_G \] 
Clearly these may be viewed as monomials in Leavitt path superalgebra over extended BPS quiver $\overline{Q}$ by taking the vertices $v_G, v_Y$ and $v_{\gamma_1}$ as fermionic vertices. We believe that the study of Leavitt path superalgebras will open up the possibility of an algebraic characterization of physical line defect.  Furthermore, using the theory of cluster algebras, Cirafici derives an algorithm to compute the framed BPS spectra of new defects from known ones. Recently, in \cite{LMRS}, the supersymmetric version of cluster algebra was introduced. We hope that by developing the notion of supersymmetric analogue of Leavitt path algebra, we will be able to understand line defects in a much better way and we will be able to relate it with supersymmetric cluster algebras. 

Recall that superalgebras were introduced by physicists to provide an algebraic framework for describing the symmetry that arises when we deal with both types of elementary particles, Bosons and Fermions. Bosons are the particles that follow integral spin and Fermions follow half-integral spin. The Boson-Fermion symmetry is called supersymmetry and it holds the key to unified field theory.  We refer the reader to \cite{V} for more details on supersymmetry. A super vector space $V$ is a vector space that is $\mathbb Z_2$-graded, that is, it has a decomposition $V=V_0 \oplus V_1$ with $0, 1\in \mathbb Z_2:=\mathbb Z/2\mathbb Z$. The elements of $V_0$ are called the {\it even} (or Bosonic) elements and the elements of $V_1$ are called the {\it odd} (or Fermionic) elements. The elements in $V_0\cup V_1$ are called {\it homogeneous} and their {\it parity}, denoted by $p$, is defined to be $0$ or $1$ according as they are even or odd. The morphisms in the category of super vector spaces are linear maps which preserve the gradings. A superalgebra $A$ is an algebra such that the multiplication map $A\otimes A\rightarrow A$ is a morphism in the category of super vector spaces. This is the same as requiring $p(ab)=p(a)+p(b)$ for any two homogeneous elements $a$ and $b$ in $A$. A superalgebra $A$ is called {\it supercommutative} if $ab=(-1)^{p(a)p(b)}ba$ for any two homogeneous elements $a$ and $b$ in $A$. We refer the reader to \cite{Kac} and \cite{Manin} for more details on superalgebras.     

Leavitt path algebras are algebraic analogues of graph C*-algebras and are also natural generalizations of Leavitt algebras of type $($$1, n$$)$ constructed in \cite{Leavitt}. Let $K$ be a field and $E$ be an arbitrary directed graph. Let $E^0$ be the set of vertices, and $E^1$ be the set of edges of directed graph $E$. Consider two maps $r: E^1 \rightarrow E^0$ and $s:E^1 \rightarrow E^0$. For any edge $e$ in $E^1$, $s(e)$ is called the {\it source} of $e$ and $r(e)$ is called the {\it range} of $e$. If $e$ is an edge starting from vertex $v$ and pointing toward vertex $w$, then we imagine an edge starting from vertex $w$ and pointing toward vertex $v$ and call it the {\it ghost edge} of $e$ and denote it by $e^*$. We denote by $(E^1)^*$, the set of all ghost edges of directed graph $E$. If $v \in E^0$ does not emit any edges, i.e. $s^{-1}(v) = \emptyset$, then $v$ is called a {\it sink} and if $v$ emits an infinite number of edges, i.e. $|s^{-1}(v)| = \infty$, then $v$ is called an {\it infinite emitter}. If a vertex $v$ is neither a sink nor an infinite emitter, then $v$ is called a {\it regular vertex}. 

\bigskip

\noindent The {\it Leavitt path algebra} of $E$ with coefficients in $K$, denoted by $L_K(E)$, is the $K$-algebra generated by the sets $E^0$, $E^1$, and $(E^1)^*$, subject to the following conditions:
	\begin{enumerate}
		\item[(A1)] $v_iv_j = \delta_{ij} v_i$ for all $v_i, v_j \in E^0$.
		\item[(A2)] $s(e)e = e = er(e)$ and $r(e)e^* = e^* = e^*s(e)$ for all $e$ in $E^1$.
		\item[(CK1)] $e_i^*e_j = \delta_{ij} r(e_i)$ for all $e_i, e_j \in E^1$.
		\item[(CK2)]  If $v \in E^0$ is any regular vertex, then $v = \sum_{\{e \in E^1: s(e) = v\}} ee^*$.
	\end{enumerate}
Conditions (CK1) and (CK2) are known as the {\it Cuntz-Krieger relations}. If $E^0$ is finite, then $\sum\limits_{v_i \in E^0} v_i$ is an identity for $L_K(E)$ and if $E^0$ is infinite, then $E^0$ generates a set of local units for $L_K(E)$. See \cite{book} for further details on Leavitt path algebras. 	 

\bigskip

\noindent In this paper we first define Leavitt path superalgebras and then describe their monomials. Using this description we obtain Gr\"{o}bner-Shirsov basis for Leavitt path superalgebras and characterize when they have polynomial growth. We also describe the Jacobson radical of these superalgebras and characterize when they are von Neumann regular.   

\bigskip
    
\section{Leavitt path superalgebra}

\bigskip

\noindent Let $K$ be a field of characteristic different from 2. Consider a directed graph $E$ with two types of vertices: Bosonic vertices and Fermionic vertices. Let $B$ be the set of Bosonic vertices and $F$ be the set of Fermionic vertices. Let us denote the set of all vertices by $E^0$, and the set of edges by $E^1$. Note that $E^0=B\cup F$ and $B\cap F=\emptyset$. For any edge $e$ in $E^1$, $s(e)$ is called the {\it source} of $e$ and $r(e)$ is called the {\it range} of $e$. If $e$ is an edge starting from vertex $v$ and pointing toward vertex $w$, then we imagine an edge starting from vertex $w$ and pointing toward vertex $v$ and call it the {\it ghost edge} of $e$ and denote it by $e^*$. We denote by $(E^1)^*$, the set of all ghost edges of directed graph $E$. 

We will call an edge $e$, a {\it Bosonic (resp., Fermionic)} edge if both $s(e)$ and $r(e)$ are Bosonic (resp., Fermionic). For the convenience of readers, we will generally denote Bosonic edges with $e$ and Fermionic edges with $f$. We will call an edge $e$, a {\it mixed type edge} if one of the $s(e)$ and $r(e)$ is Bosonic and the other is Fermionic. Specifically, if $s(e)$ is Fermionic and $r(e)$ is Bosonic for an edge $e$, then we will call it {\it left Fermionic} and denote it by $f_l$. If for an edge $e$, $s(e)$ is Bosonic and $r(e)$ is Fermionic we will say it is {\it right Fermionic} and denote it by $f_r$. Consider a path $\alpha=e_1e_2 \ldots e_{n-1} e_{n} \ldots $ with $n \ge 2$ such that $r(e_i)=s(e_{i+1})$. We will call $\alpha$ to be a Bosonic path if $s(\alpha), r(\alpha) \in B$ and no vertices of $\alpha$ are Fermionic. We call a path $\alpha$ to be left Fermionic if $s(\alpha) \in F$ and no other vertices of $\alpha$ are Fermionic and right Fermionic if $r(\alpha) \in F$ and no other vertices of $\alpha$ are Fermionic. If both $s(\alpha)$ and $r(\alpha)$ are Fermionic and no other vertex of $\alpha$ is Fermionic, then we call $\alpha$ to be a Fermionic path.

Consider the $K$-algebra $SL_K(E)$ generated by $E^0$, $E^1$ and $(E^1)^*$ subject to the following relations:

\begin{enumerate}

\item 
\noindent \subitem (i) $v_iv_j=\delta_{ij}v_i$ for all $v_i, v_j \in B$.
\subitem (ii) $v_iv_j=-v_jv_i$ for all $v_i, v_j \in F$. In particular, $v_i^2=0$ for any $v_i\in F$.
\subitem (iii) $v_iv_j=v_jv_i$ if $v_i \in B$ and $v_j \in F$.

\item
\noindent \subitem (i) $s(e)e=e=er(e)$, if $s(e), r(e) \in B$.
\subitem (ii) $s(e)e=e$, $er(e)=0$, if $s(e) \in B$ and $r(e) \in F$.
\subitem (iii) $s(e)e=0$, $er(e)=e$, if $s(e) \in F$ and $r(e) \in B$.
\subitem (iv) $s(e)e=0=er(e)$, if $s(e), r(e) \in F$.

\item
\noindent \subitem (i) $r(e)e^*=e^*=e^*s(e)$, if $s(e), r(e) \in B$.
\subitem (ii) $r(e)e^*=e^*$, $e^*s(e)=0$, if $r(e) \in B$ and $s(e) \in F$.
\subitem (iii) $r(e)e^*=0$, $e^*s(e)=e^*$, if $r(e) \in F$ and $s(e) \in B$.
\subitem (iv) $r(e)e^*=0=e^*s(e)$, if $s(e), r(e) \in F$.

\item 
\noindent \subitem (i) $e^*e=r(e)$, if $r(e), s(e) \in B$.
\subitem (ii) $e^*e=0$, if $r(e) \in F$ or $s(e) \in F$.
\subitem (iii) $e_i^*e_j=0$, if $e_i \neq e_j$ with both $e_i$ and $e_j$ being Bosonic edges.

\medskip

\item   If $v \in B$ is any regular vertex, then $v = \sum_{\{e \in E^1: s(e) = v, r(e) \in B\}} ee^*$.

\end{enumerate}  

\bigskip

\noindent We multiply two edges $e$ and $f$ as $ef = e(r(e)s(f))f$. We multiply an edge $e$ and a vertex $v$ as $ve = (v s(e))e$ and $ev = e(r(e)v)$.  We will multiply any two words $x=x_1x_2\cdots x_n$ and $y = y_1y_2 \cdots y_m$ together as $xy = x_1\cdots x_n y_1 \cdots y_m$. This multiplication is consistent with multiplication in a Leavitt path algebra.\\

\noindent Define the parity $p:E^0\rightarrow \mathbb Z_2:=\mathbb Z/2\mathbb Z$ as $p(v)=0$ if $v \in B$ and $p(v)=1$ if $v \in F$. For any $e\in E^1$, define $p(e)=p(s(e))+p(r(e))=p(e^*)$. Let us denote the extended graph $(E^0, E^1\cup (E^1)^*)$ by $\overline{E}$. Consider the path algebra $K \overline{E}$ which is a $K$-algebra with paths of $\overline{E}$ as a basis and multiplication given by concatenation of paths. Our grading makes the path algebra $K\overline{E}$, a $\mathbb Z_2$-graded algebra. Since the defining relations above for $SL_K(E)$ are homogeneous with respect to this grading, we have that $SL_K(E)$ is also a $\mathbb Z_2$-graded algebra. This associative algebra $SL_K(E)$ is a supersymmetric analogue of Leavitt path algebra and so we call it Leavitt path superalgebra. 

\bigskip

\begin{example} 
Let $E$ be the graph with $n$ Bosonic vertices $\{v_1, ..., v_n\}$ and $m$ Fermionic vertices $\{w_1, ..., w_m\}$ with no edges. Then $SL_K(E) =A \otimes K[w_1, \cdots w_m]$ where $A=Kv_1+\ldots + Kv_n\cong K^n$ and $K[w_1, \ldots, w_m]$ is the Grassmann algebra generated by $w_1, \ldots, w_m$ with $w_iw_j = - w_jw_i$. 
\end{example}

\begin{example} 
Let $E$ be the graph below with $n$ Fermionic vertices and $m$ Bosonic vertices as shown below. Then $SL_K(E) \cong M_m(K) \otimes K[w_1, w_2, \cdots w_n]$, where $K[w_1, w_2, \cdots w_n]$ is the grassman algebra with $w_i^2 =0$. 
\begin{center} 
\begin{tikzpicture} 
\node[vertex, label = {below: $w_1$}] (w1) at (-2,1) {};
\node[vertex, label = {below: $w_2$}] (w2) at (-1, 1) {}; 
\node[vertex, label = {below: $w_{n-1}$}] (w3) at (0,1) {};
\node[vertex, label = {below: $w_{n}$}] (w4) at (1,1) {};
\node[vertex, label= {}] (dot) at (-.25, 1) {};
\node[vertex, label= {}] (dot) at (-.5, 1) {};
\node[vertex, label= {}] (dot) at (-.75, 1) {};
\path[->] (w1) edge (w2);
\path[->] (w3) edge (w4);  
\node[vertex, label = {below: $v_1$}] (v1) at (-2,0) {};
\node[vertex, label = {below: $v_2$}] (v2) at (-1, 0) {}; 
\node[vertex, label = {below: $v_{m-1}$}] (v3) at (0,0) {};
\node[vertex, label = {below: $v_m$}] (v4) at (1,0) {};
\node[vertex, label= {}] (dot) at (-.25, 0) {};
\node[vertex, label= {}] (dot) at (-.5, 0) {};
\node[vertex, label= {}] (dot) at (-.75, 0) {};
\path[->] (v1) edge (v2);
\path[->] (v3) edge (v4);  
\end{tikzpicture} 
\end{center}
\end{example}

\bigskip

\section{Monomials of Leavitt path superalgebras}

\bigskip

\noindent In this section we will provide description for monomials of Leavitt path superalgebras. We first prove a useful lemma.

\begin{lemma}
Let $x$ and $f$ be two elements of a Leavitt path superalgebra $SL_K(E)$ such that $s(f) \in F$. Then $xf = 0 = f^*x$. Similarly, if $r(f) \in F$ then $fx = 0 = x f^*$. 

Specifically, if $f$ is a Fermionic path or edge then $xf = 0 = fx$; if $f_l$ is a left Fermionic path or edge $xf_l = 0 = (f_l)^* x$; and if $f_r$ is a right Fermionic path or edge then $f_rx = 0 = x (f_r)^*$.
\end{lemma}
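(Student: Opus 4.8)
The plan is to reduce every assertion to a one‑generator computation. Since $SL_{K}(E)$ is generated by $E^{0}\cup E^{1}\cup(E^{1})^{*}$, any element $x$ is a finite $K$‑linear combination of monomials in these generators, so by linearity I may assume $x=x_{1}\cdots x_{n}$ is a single such monomial. Writing $xf=(x_{1}\cdots x_{n-1})(x_{n}f)$ and $f^{*}x=(f^{*}x_{1})(x_{2}\cdots x_{n})$ and using associativity, it suffices to prove $x_{n}f=0$ and $f^{*}x_{1}=0$ whenever the peeled‑off factor is a single generator — a vertex, an edge, or a ghost edge. For a path $\alpha=e_{1}\cdots e_{m}$ I would instead peel off only the extreme edge: if $s(\alpha)\in F$ then $s(e_{1})\in F$, so $x\alpha=(xe_{1})(e_{2}\cdots e_{m})$ and $\alpha^{*}x=(e_{m}^{*}\cdots e_{2}^{*})(e_{1}^{*}x)$ both vanish once the edge case is known; similarly $\alpha x$ and $x\alpha^{*}$ vanish when $r(\alpha)\in F$ by peeling off $e_{m}$.

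The engine is a short list of ``collapsing'' identities read straight off relations (2) and (3): if $s(f)\in F$ then $s(f)f=0$ (relation (2)(iii) when $r(f)\in B$, relation (2)(iv) when $r(f)\in F$) and $f^{*}s(f)=0$ (relation (3)(ii) or (3)(iv)); dually, if $r(f)\in F$ then $fr(f)=0$ and $r(f)f^{*}=0$. Granting these, each one‑generator case is one line, using the multiplication conventions introduced above together with the convention $s(e^{*})=r(e)$, $r(e^{*})=s(e)$ so that those conventions apply to ghost edges. For example, with $s(f)\in F$: for a vertex $v$, $vf=(vs(f))f=v\bigl(s(f)f\bigr)=0$; for an edge $e$, $ef=e\bigl(r(e)s(f)\bigr)f=e\bigl(r(e)(s(f)f)\bigr)=0$; for a ghost edge $e^{*}$, $e^{*}f=e^{*}\bigl(r(e^{*})s(f)\bigr)f=e^{*}\bigl(s(e)(s(f)f)\bigr)=0$. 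The computation of $f^{*}x_{1}$ is the mirror image, now sliding the vanishing factor $f^{*}s(f)$ to the left via $r(f^{*})=s(f)$; and the case $r(f)\in F$ is obtained by the same two computations with $fr(f)=0$ and $r(f)f^{*}=0$ in place of $s(f)f=0$ and $f^{*}s(f)=0$. The ``Specifically'' clauses then follow by specialization: a Fermionic edge or path has $s(\cdot)\in F$ and $r(\cdot)\in F$, hence $xf=0=fx$; a left Fermionic $f_{l}$ has $s(f_{l})\in F$, hence $xf_{l}=0=(f_{l})^{*}x$; a right Fermionic $f_{r}$ has $r(f_{r})\in F$, hence $f_{r}x=0=x(f_{r})^{*}$.

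I do not anticipate a real obstacle, but two bookkeeping points must not be fumbled. The first is the order of association: grouping $vf$ as $(vs(f))f$ leaves a useless product of two bare vertices against $f$, whereas re‑associating as $v\cdot\bigl(s(f)f\bigr)$ lets the relation $s(f)f=0$ fire; every case hinges on sliding the offending edge or ghost edge next to its own source/range vertex before invoking a relation. The second is verifying that the collapsing identities genuinely hold in \emph{both} admissible sub‑cases ($r(f)\in B$ versus $r(f)\in F$ when $s(f)\in F$, and symmetrically when $r(f)\in F$) — this is exactly why relations (2) and (3) were each split into four sub‑items, and once this small case check is made the lemma reduces to the two‑line computation above.
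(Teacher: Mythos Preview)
Your proposal is correct and follows essentially the same approach as the paper: reduce by linearity to monomials, then use the multiplication convention to insert the vertex $s(f)$ (respectively $r(f)$) adjacent to $f$ and invoke the collapsing relation $s(f)f=0$ (respectively $fr(f)=0$). The paper's proof is simply the one-line version of your argument, writing $x_{i}^{1}\cdots x_{i}^{m}f = x_{i}^{1}\cdots x_{i}^{m}\,r(x_{i}^{m})\,s(f)\,f = 0$ without separating out the vertex/edge/ghost-edge cases for $x_{i}^{m}$.
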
 
 
\begin{proof}
Let $x$ be an arbitrary element of $SL_K(E)$. Then $x = \sum_{i=1}^n x_i$ where $x_i = x_i^1 \cdots x_i^m$ is a monomial. Now suppose $s(f) \in F$. Then 
\[ xf = \sum_{i=1}^n x_if = \sum_{i=1}^n x_i^1\cdots x_i^mf = \sum_{i=1}^n x_i^1\cdots x_i^mr(x_i^m)s(f)f = 0 \] 
Similarly we can show that if $r(f) \in F$ then $fx = 0$. 
\end{proof}

As a consequence we have the following useful corollary.

\begin{corollary}
The Leavitt path superalgebra $SL_K(E)$ has a multiplicative identity if and only if $E^0$ consists only of Bosonic vertices and is a finite set. In this case, $SL_K(E)=L_K(E)$ and $1=v_1+\cdots+v_n$ where $E^0=B=\{v_1, \ldots, v_n\}$.   
\end{corollary}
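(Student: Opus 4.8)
The plan is to prove both directions of the equivalence, using the preceding Lemma as the main tool for the ``only if'' direction.

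First I would dispose of the ``if'' direction, which is essentially the classical statement about Leavitt path algebras. If $E^0 = B = \{v_1,\dots,v_n\}$ consists only of Bosonic vertices and is finite, then relations (1)(ii) and (1)(iii) are vacuous, relations (2)(ii)--(iv), (3)(ii)--(iv) and (4)(ii) involving Fermionic vertices never apply, and relation (5) becomes the usual Cuntz--Krieger relation (CK2). Thus the defining relations of $SL_K(E)$ collapse exactly to (A1), (A2), (CK1), (CK2), so $SL_K(E) = L_K(E)$. Then, as recalled in the introduction, $\sum_{i=1}^n v_i$ is a multiplicative identity: for any generator $x$, relations (1)(i), (2)(i), (3)(i) give $\bigl(\sum_i v_i\bigr) x = x = x \bigl(\sum_i v_i\bigr)$ using $s(x) x = x = x r(x)$ (and the analogue for ghost edges), and orthogonality of the $v_i$ kills the cross terms; this extends to all of $SL_K(E)$ by linearity and multiplicativity.

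For the ``only if'' direction, suppose $SL_K(E)$ has a multiplicative identity, call it $u$. I would first rule out Fermionic vertices. If some $w \in F$ existed, then $w$ is an element with $s(w) = r(w) = w \in F$, so $w$ is (vacuously) a Fermionic path/edge in the sense of the Lemma's last sentence, hence $u w = 0$; but $u w = w \neq 0$ since $w$ is a nonzero basis element of the path superalgebra, a contradiction. (Alternatively: by the Lemma, $w x = 0$ for all $x$, so $w = w u = 0$.) Hence $E^0 = B$. Next I would show $E^0$ is finite. With $E^0 = B$, the algebra is $L_K(E)$, and I would invoke (or reprove) the standard fact that a Leavitt path algebra $L_K(E)$ is unital if and only if $E^0$ is finite: if $E^0$ is infinite, write the identity $u = \sum_{j} k_j \mu_j \nu_j^*$ as a finite $K$-linear combination of monomials; only finitely many vertices appear among the sources and ranges of these finitely many monomials, so picking a vertex $v \in E^0$ not among them gives $v u = 0 \neq v$, a contradiction. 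Therefore $E^0 = B$ is finite, and then by the ``if'' direction $SL_K(E) = L_K(E)$ with $1 = v_1 + \cdots + v_n$.

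The main obstacle I anticipate is the linear independence / nonvanishing facts I am using implicitly: that a single vertex $w$ (or $v$) is nonzero in $SL_K(E)$, and that the stated spanning expression for an arbitrary element holds. These rest on having a basis (or at least a normal form) for $SL_K(E)$; the excerpt promises a Gröbner--Shirshov basis later, so in a careful write-up I would either cite that forward or, more cleanly, observe that the grading/path-algebra structure already guarantees vertices are nonzero and that every element is a finite sum of monomials $\mu \nu^*$, which is all that is needed here. Modulo that bookkeeping, the argument is a routine adaptation of the classical Leavitt path algebra result together with the Lemma.
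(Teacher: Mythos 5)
Your proposal is correct and follows essentially the same route as the paper: use the Lemma to rule out Fermionic vertices, observe that the algebra then coincides with $L_K(E)$, and invoke the classical fact that $L_K(E)$ is unital if and only if $E^0$ is finite, with $1=v_1+\cdots+v_n$. You supply more detail than the paper does (the explicit ``missing vertex'' argument for infiniteness and the caveat about vertices being nonzero, which the paper glosses over), but the underlying argument is the same.
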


\begin{proof}
As we have noticed in the lemma above, if $f$ is any element in $SL_K(E)$ with $s(f)\in F$ or $r(f)\in F$ then $xf=0$ or $fx=0$ for any element $x\in SL_K(E)$. Thus it is clear that $SL_K(E)$ has a multiplicative identity if and only if there are no Fermionic vertices and $E^0$ consists only of Bosonic vertices. In this case $SL_K(E)=L_K(E)$. From the theory of Leavitt path algebras we know that in this case $E^0$ must be a finite set and $1=v_1+\cdots+v_n$ where $E^0=B=\{v_1, \ldots, v_n\}$.
\end{proof}

In the notation below, when we say that $p = \emptyset$ this implies that $qp = q$ or $pq = q$. We define $s(\alpha\beta^*) = s(\alpha)$ if $\alpha \neq \emptyset$ or $s(\beta^*)$ if $\alpha = \emptyset$. We define $r(\alpha\beta^*) = r(\beta^*)$ if $\beta \neq \emptyset$ or $r(\alpha)$ if $\beta = \emptyset$. 

Now we are ready to prove our main result of this section that describes monomials of Leavitt path superalgebras. 

\begin{theorem}\label{Monomials}
Each element $x$ of a Leavitt path superalgebra $SL_K(E)$ can be written as a linear combination of monomials of the following form. 
\[ x^0 w^1 x^1 w^2 x^2 \cdots x^{n-1}w^n x^n \] 
subject to the following conditions: 
\begin{enumerate}
\item $x^0 = \emptyset$ or $x^0 = \alpha_0\beta_0^*$ such that $r(x^0) \in B$ if $w^1 \neq \emptyset$
\item $w^i = w_{i,1}w_{i,2} \cdots w_{i,l}$ where $w_{i,l} \in F$ $w_{i,j} \neq w_{i,k}$ if $j \neq k$
\item If $i \in \{1, ..., n-1\}$ then $x^i = \alpha_i\beta_i^*$ such that $r(x^i), s(x^i) \in B$
\item $x^n = \emptyset$ or $x^n = \alpha_n\beta_n^*$ such that $s(x^n) \in B$. 
\item For $i \in \{0,1,..., n-1\}$, $r(x^i) = s(x^{i+1})$. 
\item For $i \in \{0,1,...,n\}$, $r(\alpha_i) = r(\beta_i)$. 
\end{enumerate} 
\end{theorem}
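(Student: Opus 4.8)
Since $SL_K(E)$ is generated as a $K$-algebra by $E^0\cup E^1\cup(E^1)^*$, every element is a $K$-linear combination of words $z=z_1z_2\cdots z_m$ in these generators, so it suffices to show each such word is a $K$-linear combination of monomials of the stated form --- in fact it will turn out to be $0$ or $\pm$ one such monomial. The plan is to run a terminating rewriting procedure built from the defining relations $(1)$--$(4)$ and the Lemma, and then read off, by induction on length, the shape of an irreducible word.

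\textbf{The rewriting steps.} On a word $z$ I would repeatedly apply: \emph{(a)} relations $(1)$ --- two consecutive Bosonic vertices collapse to $\delta_{ij}v_i$ (killing $z$ if $i\ne j$), a Bosonic vertex commutes past an adjacent Fermionic one so Bosonic vertices can be slid against the neighbouring edge and absorbed, and a maximal run of Fermionic vertices is sorted by index, with $z\to 0$ the moment a repeat appears since $v^2=0$ for $v\in F$; \emph{(b)} relations $(2)$--$(3)$ --- a vertex is absorbed into an adjacent edge or ghost when source/range match, and at every ``joint'' of $z$ these relations together with $v^2=0$ force the range of the left factor to equal the source of the right factor (so conditions $(5)$--$(6)$ are automatic for nonzero $z$, and in particular two consecutive edges or ghost edges must join at a \emph{Bosonic} vertex); \emph{(c)} relation $(4)$ (CK1) --- a factor $e_i^*e_j$ in which a ghost is immediately followed by a real edge equals $\delta_{ij}r(e_i)$ if $e_i=e_j$ is Bosonic and is $0$ otherwise, and in the surviving case the resulting vertex is absorbed by \emph{(b)}, strictly shortening $z$; \emph{(d)} the Lemma --- a Fermionic edge or ghost occurring with any other letter forces $z=0$, a left-Fermionic edge $f_l$ (resp.\ right-Fermionic ghost $(f_r)^*$) can occur only as the first letter of $z$, and a left-Fermionic ghost $(f_l)^*$ (resp.\ right-Fermionic edge $f_r$) only as the last. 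Each CK1 step lowers the length and each Fermionic sort lowers the number of inversions, so the procedure terminates, leaving $0$ or an irreducible word $z'$.

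\textbf{Reading off the normal form.} In a nonzero irreducible $z'$ all Bosonic vertices are absorbed, and the surviving vertices occur only in maximal, index-sorted runs of distinct Fermionic vertices: these are the blocks $w^1,\dots,w^n$ of $(2)$. Between two consecutive such runs $z'$ has only Bosonic edges and ghosts with no ghost-before-real factor remaining, so it is a path followed by a ghost path $\alpha_i\beta_i^*$ with $r(\alpha_i)=r(\beta_i)$; since this stretch is flanked by a Fermionic vertex on each side while no Fermionic-incident edge can lie strictly inside it (consecutive edges and ghosts join at Bosonic vertices, and the remaining mixed/Fermionic cases sit at the very ends of $z'$ by the Lemma), its source and range lie in $B$ --- this gives $(3)$. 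The outer blocks $x^0,x^n$ are of the same shape, except that by the Lemma a single left- or right-Fermionic edge or ghost may appear at the extreme left of $x^0$ or the extreme right of $x^n$, relaxing only the source of $x^0$, resp.\ the range of $x^n$, while keeping the other endpoint in $B$ whenever an adjacent $w$-block is present --- this gives $(1)$ and $(4)$. If $z'$ has no Fermionic vertex it is a single block $\alpha_0\beta_0^*$ (the classical Leavitt normal form, $n=0$), and words of length $\le 1$ are of the claimed form on inspection. Assembling these, $z'=x^0w^1x^1\cdots w^nx^n$ subject to $(1)$--$(6)$, which closes the induction.

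\textbf{Where the difficulty lies.} The reduction \emph{inside} a Bosonic stretch is verbatim the classical Leavitt path algebra argument and needs no new idea. The technical core is the classification in the last step --- determining exactly which edges and ghost edges survive adjacent to a Fermionic vertex so that each inner block is genuinely $\alpha_i\beta_i^*$ with source and range in $B$ and the mixed-type edges are confined to the two outermost blocks. This rests on combining the Lemma with the nilpotency $v^2=0$ for $v\in F$ and a short case check of the products $e_i^*e_j$, together with a careful treatment of the empty-block and $n=0$ conventions; nothing here is conceptually deep, but it is where all the bookkeeping lives.
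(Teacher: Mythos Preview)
Your argument is correct and arrives at the same conclusion, but the route is genuinely different from the paper's. The paper proves the theorem by a straightforward induction on the length of a word: given a nonzero word $x_1\cdots x_n$, it assumes $x_1\cdots x_{n-1}$ is already in normal form and then runs a finite case analysis on the pair $(x_{n-1},x_n)$, distinguishing whether $x_{n-1}$ is a Fermionic vertex or an edge/ghost with Bosonic range, and in each subcase showing how the new letter $x_n$ is absorbed into the existing block structure. Your proof instead sets up an explicit rewriting system out of relations $(1)$--$(4)$ together with the Lemma, argues termination via length and inversion count, and then classifies the irreducible words structurally. In effect you are anticipating the Gr\"{o}bner--Shirshov viewpoint that the paper only deploys later (in the basis theorem), whereas the paper keeps this result at the level of bare induction.

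What each buys: the paper's letter-by-letter induction is more elementary and makes the verification of conditions $(5)$ and $(6)$ completely mechanical, since at each step one sees exactly which relation fires. Your rewriting approach is more conceptual and makes transparent \emph{why} the mixed-type edges are confined to the outer blocks and why the inner $x^i$ are purely Bosonic, but it asks the reader to trust a global termination-and-confluence picture. One small point to tighten: your sentence ``all Bosonic vertices are absorbed'' is not literally true---a lone Bosonic vertex $v$ can survive as a length-zero block $x^i=\alpha_i\beta_i^*$ with $\alpha_i=\beta_i=v$ (e.g.\ in $vw_1w_2$)---so you should phrase this as ``absorbed into an adjacent edge or left as a trivial path.'' With that correction your argument is complete.
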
 

\begin{proof}
We consider a monomial as a word consisting of the generators of $SL_K(E)$:  $\{w, v, e, e^*, f, f^*, f_l, f_l^*, f_r, f_r^*\}$. We will prove the result by induction on the length of the word $x$, denoted $l(x)$. 

 Suppose $x$ is a word with $l(x)=1$. Then $x$ must look like one of the following $\{w, v, e, e^*, f, f^*, f_l, f_l^*, f_r, f_r^*\}$. Each of these can be written in the form of the monomial listed above. 

Suppose that result holds for any word $x'$ with $l(x')<n$. Consider $x ( \neq 0)$ with $l(x)=n$. Now $x = x_1x_2 \cdots x_{n-1}x_n$ and $x_1\cdots x_{n-1}$ is a monomial of the form listed above by the induction hypothesis. Now since $x \neq 0$ then $x_{n-1}$ cannot be a right Fermionic edge, a left Fermionic ghost edge, Fermionic edge or Fermionic ghost edge. This implies that $x_{n-1}$ is a Fermionic vertex or that $x_{n-1}$ is an edge or ghost edge such that $r(x_{n-1}) \in B$. Note also that if $x_n$ is an edge or ghost edge then $s(x_n) \in B$. So $x_n$ can be a Bosonic vertex, a Fermionic vertex, an edge $e$ such that $s(e) \in B$ or a ghost edge $e^*$ such that $r(e) \in B$. 

\medskip

\noindent {\bf Case 1:} Let $x_{n-1}$ be a Fermionic vertex. Then $x_{n-1} = w_{i,l}$ such that $x= x_1 \cdots x_{n-l-1}w^ix_n$ where $w^i = w_{i,1} \cdots w_{i,l}$. Note that $l$ can equal $1$ here. 
\begin{itemize}
\item Let $x_n = v \in B$, then we can write $x = x_1 \cdots x_{n-l-1}w^i v = x_1 \cdots x_{n-l-1}vw^i$ and $v = r(x_{n-l-1})$ therefore $x = x_1\cdots x_{n-l-1}w^i$ which is a monomial as above. 
\item Let $x_n = w \in F$, then we can write $x = x_1 \cdots x_{n-l-1}w^iw \neq 0$ so $w \neq w_{i,j}$ for any $j \in \{1,...,l\}$ thus we can write $x = x_1 \cdots x_{n-l-1}w'^i$ such that $w'^i = w_{i,1}\cdots w_{i,l}w$ and $x$ is a monomial as written above. 
\item Let $x_n$ be an edge or ghost edge. Note $s(x_n) \in B$ and therefore, we have $x = x_1 \cdots x_{n-l-1}w^i s(x_n)x_n = x_1 \cdots x_{n-l-1} r(x_{n-l-1})s(x_n)w^i x_n$ therefore $r(x_{n-l-1}) = s(x_n)$. Therefore $x$ is a monomial as above. 
\end{itemize}
{\bf Case 2:} Let $x_{n-1}$ be an edge or a ghost edge such that $r(x_{n-1}) \in B$. Then we have two cases, $x_{n-1} = e^*_{i,l}$  or $x_{n-1} = e_{i,l}$. 

\medskip

\noindent Suppose $x_{n-1} = e^*_{i,l}$ then $x = x_1 \cdots x_{n-l-1} \beta_i^*x_n$ where $\beta_i^* = e^*_{i,1} \cdots e^*_{i,l}$. 
\begin{itemize}
\item Let $x_n = w \in F$, then $x = x_1 \cdots x_{n-l-1} \beta_i^* x_n$ which is a monomial as above. 
\item Let $x_n = v \in B$, then $x = x_1 \cdots x_{n-l-1} \beta_i^* v = x_1 \cdots x_{n-l-1} \beta_i^*$ as $v = r(\beta_i^*)$ since $x \neq 0$ and $x$ is therefore a monomial as above. 
\item  Let $x_n = e$ an edge in $E$, now $s(e), r(e) \in B$ since else $x =0$. Therefore $e = e_{i,l}$ and $x = x_1 \cdots x_{n-l-1} \beta_i^{'*}$ where $\beta_i^{'*} = e_{i,1}^* \cdots e_{i,l-1}^*$ then $x$ is as above. 
\item Let $x_n = e^*$ a ghost edge in $E$ now $s(e^*) \in B$ and $s(e^*) = r(e_{i,l}^*)$ so that $x = x_1 \cdots x_{n-l-1}\beta_i^{'*}$ where $\beta_i^{'*} = e_{i,1}^* \cdots e_{i,l}^*e^*$ and $x$ is a monomial as above. 
\end{itemize}
Suppose $x_{n-1} = e_{i,l}$, then $x = x_1 \cdots x_{n-l-1} \alpha_i x_n$ where $\alpha_i = e_{i,1} \cdots e_{i,l}$.  
\begin{itemize}
\item Let $x_n = w \in F$ then $x = x_1 \cdots x_{n-l-1} \alpha_i w$ which is a monomial as above. 
\item Let $x_n = v \in B$ then $r(\alpha_i) = v$ else $x =0$ therefore $x = x_1 \cdots x_{n-l-1}\alpha_i v = x_1 \cdots x_{n-l-1} \alpha_i$ which is a monomial as above. 
\item Let $x_n = e$ be an edge in $E$ then $s(e) = r(\alpha_i) \in B$ since $x \neq 0$ therefore $x = x_1 \cdots x_{n-l-1}\alpha'_i$ where $\alpha'_i = e_{i,1} \cdots e_{i,l}e$. Note that $r(e)$ could be either Fermionic or Bosonic, so $x$ is a monomial as above. 
\item Let $x_n = e^*$ be a ghost edge in $E$. Now $r(e) = s(\alpha_i) \in B$ else $x =0$ so we can write $x = x_1 \cdots x_{n-l-1}\alpha_i\beta_i^*$ where $\beta_i^* = e^*$. Note that $s(e)$ can be either Fermionic or Bosonic and therefore $x$ is a monomial as above. 
\end{itemize}

This shows that any monomial of length $n$ is of the form stated in the beginning and thus the proof is complete by induction. 
\end{proof} 

\noindent Let $x = x_1x_2 \cdots x_n$ be a monomial. We call $x$ to be {\it left Fermionic} if $s(x_1) \in F$ and $r(x_n) \in B$. We call $x$ to be {\it right Fermionic} if $r(x_n) \in F$ and $s(x_1) \in B$. We call a monomial to be a {\it Fermionic monomial} if $s(x_1) \in F$  and $r(x_n) \in F$.  Similarly, we say that a monomial is a {\it Bosonic monomial} if $s(x_1) \in B$ and $r(x_n) \in B$. \\

Note that these monomials look very similar to the monomials in Leavitt path algebras of the separated graphs $(E, C)$, namely a directed graph $E$ together with a family $C$ that gives partitions of the set of edges departing from each vertex of $E$ defined by Ara and Goodearl in \cite{AG}. The Leavitt path algebra of the separated graph $(E, C)$ is the $K$-algebra $L_K(E, C)$ with generators $\{v, e, e^*: v\in E^0, e\in E^1\}$ subject to the following relations: (1) $vv'=\delta_{v, v'}v$, for all $v, v'\in E^0$, (2) $s(e)e=e=er(e)$ for all $e\in E^1$, (3) $r(e)e^*=e^*=e^*s(e)$ for all $e\in E^1$, (4) $e^*e'=\delta_{e, e'} r(e)$, for all $e, e'\in X$, $X\in C$, and (4) $v=\Sigma_{e\in X} ee^*$ for every finite set $X\in C_v, v\in E^0$.         

Ara and Goodearl \cite{AG} have shown that if $(E,C)$ is a separated graph, then the set of reduced paths of the form below form a K-basis for $L_K(E,C)$. 
\[ \alpha_1\beta_1^* \alpha_2\beta_2^* \cdots \alpha_n \beta_n^*, \ \ \alpha_i, \beta_i \in Path(E),  n \geq 1, \] 
where $\beta_i$ and $\alpha_{i+1}$ are C-separated paths for each $i \in 1, ..., r-1$.

Although the monomials are very similar, there are two main things that distinguish these algebras from each other. First, there is no way to define a separation structure on the graph $E$ so that $L_K(E,C) \subseteq SL_K(E)$ if $C_v \neq \{s^{-1}(v)\}$. If we define $C_v = \{e \in s^{-1}(v): r(e) \in B \} \cup \{f \in s^{-1}(v): r(f) \in F\}$ which is the most logical way to define this, we run into problems. Consider $f$ such that $f \in s^{-1}(v)$ and $r(f) \in F$ and $e$ such that $e \in s^{-1}(v)$ and $r(e) \in B$. Then in $L_K(E,C)$, $f^*e \neq 0$ but $f^*e =0$ in $SL_K(E)$. Similarly, recall that in $L_K(E,C)$ if $w_1,w_2 \in F$ then $w_1 w_2 = - w_2w_1$. There is no element in $L_K(E,C)$ that anticommutes like the Fermionic vertices in $SL_K(E)$. Therefore, $SL_K(E) \not \subseteq L_K(E,C)$. Therefore while their monomials are similar, there is no connection between Leavitt path superalgebras and Leavitt path algebras of separated graphs except that they are both extensions of $L_K(E)$. 
\bigskip

\section{Properties of Leavitt path superalgebras}

\bigskip

\noindent In this section we will establish some basic properties of Leavitt path superalgebras. We begin by characterizing the Jacobson radical of Leavitt path superalgebras. Recall that the Leavitt path algebras have trivial Jacobson radical. 

\begin{theorem}
Let $E$ be a connected graph. Then the Jacobson radical of a Leavitt path superalgebra $SL_K(E)$ is the ideal generated by elements $x$ of the following form
\[ x = \sum_{i=1}^n k_ix^f_i + \sum_{j=1}^m k_jx^l_j + \sum_{s=1}^t k_sx^r_s \] 
where each $x^f_i$ is a Fermionic monomial, $x^l_j$ is a left Fermionic monomial and $x^r_s$ is a right Fermionic monomial. 
\end{theorem}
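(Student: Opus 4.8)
The plan is to show that the ideal $N$ generated by the displayed elements is a two-sided nil (hence quasi-regular) ideal, and that the quotient $SL_K(E)/N$ is isomorphic to the ordinary Leavitt path algebra $L_K(E_B)$ of the Bosonic subgraph $E_B$ (the full subgraph on $B$ with those edges both of whose endpoints lie in $B$), which has trivial Jacobson radical. Since $J(A)$ is the largest quasi-regular ideal and contains every nil ideal, while $J(A/N) = 0$ forces $J(A) \subseteq N$, the two containments give $J(SL_K(E)) = N$.

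First I would analyze $N$ explicitly using Theorem~\ref{Monomials} and the Lemma. By the Lemma, if $x$ is a Fermionic, left Fermionic, or right Fermionic monomial and $y$ is any monomial, then in each product $xy$ or $yx$ the offending Fermionic endpoint kills the product unless the adjacent factor "absorbs" correctly; carrying this through, one checks that the set $S$ of $K$-linear combinations of Fermionic, left-Fermionic, and right-Fermionic monomials is \emph{already} a two-sided ideal of $SL_K(E)$ — so $N = S$, and this is why the theorem can describe $J$ as the ideal \emph{generated by} such elements. Next I would show $N$ is nil: any monomial $m$ appearing in $N$ has a Fermionic vertex somewhere along its normal form $x^0w^1x^1\cdots w^nx^n$ of Theorem~\ref{Monomials}; when we compute a power $m^k$, each pass through that Fermionic block forces (via relations (1)(ii), e.g. $v_i^2 = 0$ for $v_i \in F$, together with the Lemma) a repeated Fermionic vertex, so $m^{k}=0$ for $k$ bounded by the number of Fermionic vertices of $E$. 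A general element of $N$ is a finite sum of such monomials supported on a finite subgraph, so it lies in a finite-dimensional-over-center nil situation and is itself nilpotent; hence $N$ is nil and $N \subseteq J(SL_K(E))$.

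Then I would identify the quotient. Define $\pi\colon SL_K(E)\to L_K(E_B)$ on generators by sending each Bosonic vertex to itself, each Bosonic edge $e$ and ghost edge $e^*$ to itself, and every Fermionic vertex, every mixed or Fermionic edge, and their ghosts to $0$; one checks the defining relations (1)–(5) of $SL_K(E)$ map to the Leavitt path algebra relations (A1),(A2),(CK1),(CK2) for $E_B$ — relation (5) matches (CK2) precisely because the sum there already ranges only over edges with $r(e)\in B$. The kernel of $\pi$ is spanned by exactly the monomials of Theorem~\ref{Monomials} that involve at least one Fermionic generator, i.e. $\ker\pi = N$. Since $L_K(E_B)$ is a Leavitt path algebra it has $J(L_K(E_B)) = 0$ (as recalled just before the statement), so $J(SL_K(E))/N \subseteq J(SL_K(E)/N) = J(L_K(E_B)) = 0$, giving $J(SL_K(E)) \subseteq N$. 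Combined with the reverse inclusion, $J(SL_K(E)) = N$.

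\textbf{The main obstacle} I anticipate is the bookkeeping in the first step — proving that $S$ is genuinely a two-sided ideal and simultaneously nil — because one must handle all the product types ($ef = e(r(e)s(f))f$, edge–vertex, word–word) against each of the Fermionic/left-Fermionic/right-Fermionic shapes, and keep careful track of \emph{where} the Fermionic vertex sits in the normal form so as to guarantee a repeated Fermionic vertex on squaring. The Lemma does most of the heavy lifting for the "one-sided" kills, but the repeated-vertex argument for nilpotence needs the normal form of Theorem~\ref{Monomials} to control the $w^i$ blocks. A secondary subtlety is the hypothesis that $E$ is connected: it is presumably used to ensure $E_B$ is the "right" quotient target and that no stray central idempotent structure obstructs $J(L_K(E_B))=0$; I would flag whether connectedness is truly needed or merely convenient, since $J=0$ for Leavitt path algebras holds without it.
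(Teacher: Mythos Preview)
Your overall strategy matches the paper's proof exactly: show the ideal $N$ is nil to get $N\subseteq J$, identify $SL_K(E)/N$ with the ordinary Leavitt path algebra of the Bosonic subgraph to get $J\subseteq N$. The paper carries this out in three lines.

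Where you diverge is in the nilpotence step, and your version is both more complicated and slightly off. You propose tracking a Fermionic vertex through the $w^i$ blocks of the normal form and bounding the nilpotency index by $|F|$, then invoking a ``finite-dimensional-over-center'' argument for sums. But a Fermionic or left/right-Fermionic monomial need not contain any $w^i$ block at all (e.g.\ a single left-Fermionic path $\alpha_0$ with $s(\alpha_0)\in F$), so the repeated-vertex mechanism does not apply as stated; and the finite-dimensional claim is neither true in general nor needed. The paper simply asserts $x^3=0$ for every $x\in N$, and this follows immediately from the Lemma: among monomials of the three types, the only nonzero product of two is (left Fermionic)$\cdot$(right Fermionic), which is Fermionic, and any Fermionic monomial annihilates everything on both sides; hence any product of three such monomials vanishes, so $N^3=0$. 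Replace your nilpotence discussion with this one-line observation and the proof is complete. Your remark on connectedness is apt: neither the paper's argument nor yours actually uses it.
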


\begin{proof}
Denote the ideal generated by elements $x$ of the form $ x = \sum_{i=1}^n k_ix^f_i + \sum_{j=1}^m k_jx^l_j + \sum_{s=1}^t k_sx^r_s $ by $I$ and the Jacobson radical of $SL_K(E)$ by $J$. We will show that $I=J$. First, note that for every $x \in I$, $x^3 = 0$ therefore $I$ is a nil ideal and hence $I \subseteq J$. Since $SL_K(E)/I = SL_K(E')$ where $E'$ is a completely Bosonic graph, and in the view of the observation that for a completely Bosonic graph $E'$, the Leavitt path superalgebra $SL_K(E')$ coincides with the Leavitt path algebra $L_K(E')$, we have that $SL_K(E)/I= L_K(E')$ and therefore $J(SL_K(E)/I) = 0$. This gives us $J \subseteq I$ and consequently, we have $I = J$. This completes the proof. 
\end{proof}

Note that in a Leavitt path algebra the Jacobson radical is zero. The following example shows a graph such that the Leavitt path superalgebra cannot be represented by a Leavitt path algebra, since the Jacobson radical is non-zero. 

\begin{example}
Let $E$ be the graph below with $v$ a bosonic vertex and $w$ a fermionic vertex.
\begin{center}
\begin{tikzpicture}
\node[vertex, label={below: $v$}] (v) at (-1,0) {};
\node[vertex, label={below: $w$}] (w) at (1,0) {};
\path[->] (v) edge node[below]{$e$} (w); 
\end{tikzpicture} 
\end{center} 
Note that the edge $e \in J(SL_K(E))$ since $ex = 0$ for all $x \in SL_K(E)$ so that it is an annihilator of every simple right R-module. Therefore $J(SL_K(E))$ is non-zero and $SL_K(E)$ cannot be represented as a Leavitt path algebra. 
\end{example} 

It is not difficult to see that the Leavitt path algebra $L$ of a connected graph $E$ over a field $K$ is commutative if and only if the graph $E$ is either a single vertex or consists of a single vertex $v$ and an edge $e$ which is a loop at $v$. Next, we proceed to characterize Leavitt path superalgebras that are supercommutative.

\begin{theorem}
The Leavitt path superalgebra $SL_K(E)$ is a supercommutative superalgebra if and only if $E$ is a collection of Fermionic edges and Bosonic $R_1$ graphs along with solitary vertices. 
\end{theorem}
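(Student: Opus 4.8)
The plan is to reduce to connected graphs and then treat the two implications separately. For the reduction, write $E=\bigsqcup_\lambda E_\lambda$ as the disjoint union of its connected components. By Theorem~\ref{Monomials} every element is a combination of monomials $x^0w^1x^1\cdots w^nx^n$ whose ``$x$-part'' $x^0,\dots,x^n$ all lie in a single component (successive $x^i$ are linked by $r(x^i)=s(x^{i+1})$), while the $w^i$ are blocks of Fermionic vertices that may come from anywhere. Since distinct Bosonic vertices are orthogonal and a Bosonic vertex commutes with every Fermionic vertex, the only cross-component products that can be nonzero are products of vertices, and those already satisfy $ab=(-1)^{p(a)p(b)}ba$ by relations~$(1)$; together with the fact (again read off from the monomial basis) that each $SL_K(E_\lambda)$ embeds in $SL_K(E)$, this yields: $SL_K(E)$ is supercommutative iff every $SL_K(E_\lambda)$ is. So from now on I assume $E$ connected. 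I should flag already that $SL_K(\bigsqcup_\lambda E_\lambda)$ is \emph{not} the direct product of the $SL_K(E_\lambda)$ --- Fermionic vertices from different components still anticommute rather than annihilate --- so this step genuinely leans on the normal form of the preceding section.

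For the ``if'' direction, after the reduction the connected graph $E$ is one of: (a) a graph all of whose vertices are Fermionic (the ``collection of Fermionic edges'' / solitary Fermionic vertex case); (b) a single Bosonic vertex; (c) a single Bosonic vertex carrying one loop. Cases (b) and (c) are immediate: $SL_K(E)\cong K$ and $SL_K(E)=L_K(R_1)\cong K[t,t^{-1}]$ respectively, both commutative hence supercommutative. For (a), every edge $f$ has $s(f),r(f)\in F$, so the Lemma gives $fx=xf=0$ and $f^*x=xf^*=0$ for all $x\in SL_K(E)$; thus $SL_K(E)$ is spanned by the products $w_{i_1}\cdots w_{i_k}$ of Fermionic vertices together with the individual edges and ghost edges, the latter being of even parity and killing everything. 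An even element that annihilates everything supercommutes with every homogeneous element, while the Fermionic vertices satisfy $w_iw_j=-w_jw_i$ and $w_i^2=0$ by $(1)(ii)$; hence $SL_K(E)$ is supercommutative.

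For ``only if'' I argue by contraposition: if the connected graph $E$ is none of (a)--(c), I exhibit a failure of supercommutativity. If $E^0=B$, then all Fermionic-type relations are vacuous and $(5)$ reduces to the Cuntz--Krieger relation $(CK2)$, so $SL_K(E)=L_K(E)$; since $E$ is connected but is neither a single vertex nor a single loop, $L_K(E)$ is noncommutative by the characterization recalled just before the theorem, and being concentrated in even degree it is \emph{a fortiori} not supercommutative. Otherwise $E$ has both a Bosonic and a Fermionic vertex, so --- $E$ being connected --- some edge has exactly one endpoint in $F$; passing to its ghost if necessary, I obtain a generator $g$ with $s(g)\in B$ and $r(g)=w\in F$, so $p(g)=1$. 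Then $gw=g\,r(g)=0$ by $(2)(ii)$ (or by $(3)(ii)$, for the ghost), whereas $wg=(w\,s(g))\,g=s(g)\,w\,g$ is one of the monomials listed in Theorem~\ref{Monomials} and so, by linear independence of those monomials, is nonzero in $SL_K(E)$. Since $p(w)p(g)=1$, supercommutativity would force $wg=-gw=0$, a contradiction; hence $SL_K(E)$ is not supercommutative.

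The step I expect to be the main obstacle is making the component reduction honest in the ``if'' direction --- in particular, checking that adjoining a Bosonic $R_1$ component (whose vertex serves as a unit only for that component's Laurent subalgebra, not as a global identity) does not create a non-supercommuting pair with a Fermionic vertex living in some other component --- and, dually, justifying that the witness monomial $s(g)\,w\,g$ in the ``only if'' direction is genuinely nonzero. Both of these reduce to having a sharp basis / normal-form statement for $SL_K(E)$ (the Gr\"{o}bner--Shirshov basis derived from the monomial description), which is therefore the real load-bearing ingredient of the argument.
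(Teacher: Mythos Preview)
Your argument is correct but takes a noticeably different route from the paper's. The paper never reduces to connected components; instead it argues directly at the level of generators: for any non-Fermionic edge $e$ one has (after possibly swapping $e$ for $e^*$) $s(e)\in B$, hence $s(e)e=e$, while $es(e)\neq e$ unless $r(e)=s(e)$ --- so supercommutativity forces every non-Fermionic edge to be a Bosonic loop, and a second loop at the same vertex is then ruled out by $e_1e_2\neq e_2e_1$. Fermionic edges annihilate everything by the Lemma, so they are harmless, and the description of $E$ falls out. Your proof instead (i) reduces to connected pieces via the normal form, (ii) quotes the Leavitt-path-algebra commutativity characterization for the all-Bosonic case, and (iii) in the mixed case produces a different witness pair, namely $g$ and $w=r(g)$, showing $gw=0$ while $wg\neq 0$. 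Both strategies work; the paper's is more self-contained (it needs only that a single edge $e$ is nonzero, and $es(e)=0$ follows from the Lemma already proved), whereas your non-vanishing claim for $wg$ genuinely requires a linear-independence statement. On that point, be careful: Theorem~\ref{Monomials} only asserts that such monomials span, not that they are independent --- the load-bearing result you need is Theorem~\ref{Basis}, which appears later, so your proof as written makes a forward reference that the paper's more elementary witness avoids. Your connected-component reduction and your explicit treatment of the ``if'' direction are cleaner than the paper's somewhat compressed two-directions-at-once argument, but the extra machinery is not strictly necessary.
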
 

\begin{proof} 
First, notice that all pairs of vertices satisfy the supercommutativity condition by definition. Now suppose that $e$ is a non-Fermionic edge (that is, $e$ is Bosonic, left Fermionic or right Fermionic) then, without loss of generality, $s(e) \not \in F$ and so $s(e)e = e$. But, $es(e) \neq e$ unless $r(e) = s(e)$. Therefore the only non-Fermionic edges are Bosonic edges where $r(e) = s(e) \in B$. Suppose $f$ is a Fermionic edge then $fx = 0 = xf$ for any $x \in SL_K(E)$ therefore we can have Fermionic edges. Now suppose that $e_1$ and $e_2$ are non-Fermionic edges so that $r(e_1) = s(e_1)$, $r(e_2) = s(e_2)$. Then if $r(e_1) \neq r(e_2)$ then $e_1 e_2 = 0 = e_2 e_1$. But if $r(e_1) = r(e_2)$, then $e_1 e_2 \neq e_2e_1$. Therefore our graph $E$ consists of Fermionic edges and vertices as well as Bosonic $R_1$ graphs. 
\end{proof}

\noindent We define a (not necessarily unital) ring $R$ to be {\it von Neumann regular} if for every $x \in R$, there exists $y \in R$ such that $x = xyx$. It is known that the Leavitt path algebra $L_K(E)$ over an arbitrary graph $E$ is von Neumann regular if and only if $E$ is acyclic \cite{AR}. A graded ring $R = \bigoplus R_\gamma$ is said to be {\it graded von Neumann regular} if for every homogeneous element $x \in R$ there exists $y \in R$ such that $xyx = x$. Note that $y$ can be chosen to be a homogeneous element. In \cite{Hazrat} it was shown that for any graph $E$, $L_K(E)$ is a graded von Neumann regular ring. We seek to mimic these results for Leavitt path superalgebras. 

\begin{proposition} 
The Leavitt path superalgebra $SL_K(E)$ is von Neumann regular if and only if $E$ has no Fermionic vertices and $E$ is acyclic. 
\end{proposition}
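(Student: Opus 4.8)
The plan is to prove both directions separately, reducing the ``if'' direction to the known result that $L_K(E)$ is von Neumann regular precisely when $E$ is acyclic, and establishing the ``only if'' direction by exhibiting explicit obstructions. First I would handle the easy implication: if $E$ has no Fermionic vertices, then by the Corollary following the Lemma we have $SL_K(E) = L_K(E)$, so by the result of Abrams--Rangaswamy \cite{AR}, $SL_K(E)$ is von Neumann regular if and only if $E$ is acyclic. This disposes of the converse once the forward direction is known.

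For the forward direction I would argue contrapositively in two cases. First, suppose $E$ has a Fermionic vertex $w \in F$. By relation (1)(ii) we have $w^2 = 0$, and I claim $w$ cannot be von Neumann regular: if $w = wyw$ for some $y \in SL_K(E)$, then using $w^2 = 0$ repeatedly one gets $w = wyw = w(ywy)w \cdots$, but more directly, expand $y$ as a linear combination of monomials and observe via Theorem~\ref{Monomials} and the Lemma that every monomial $z$ with $wzw \neq 0$ must begin and end appropriately — in fact $wyw$ lies in the span of monomials containing $w$ at least twice as an isolated Fermionic-vertex factor, and by the anticommutation/nilpotence relations on $F$ together with the monomial normal form (condition (2): the $w_{i,j}$ in a block are distinct) such a monomial is zero. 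Hence $wyw = 0 \neq w$, so $w$ is not regular and $SL_K(E)$ is not von Neumann regular. Second, suppose $E$ has no Fermionic vertices but is not acyclic; then $SL_K(E) = L_K(E)$ is not von Neumann regular by \cite{AR}. Combining the two cases gives the forward implication.

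The main obstacle I anticipate is making the computation ``$wyw = 0$ for all $y$'' airtight. One must be careful that $y$ need not be homogeneous and that multiplying $w$ on both sides of an arbitrary element can, a priori, produce surviving terms; the cleanest route is to reduce $y$ to the normal form of Theorem~\ref{Monomials}, note that $w \cdot (x^0 w^1 x^1 \cdots x^n) \cdot w$ forces (by relations (1)(iii), (2)(ii), (2)(iii) governing how a Fermionic vertex absorbs into edges) the leftmost and rightmost factors of the normal form to be Fermionic vertices, and then invoke condition (2) — distinctness of the $w_{i,j}$ within each Fermionic block — together with $w^2 = 0$ to kill the product. A subtlety worth checking is whether $w$ could pair with a Fermionic vertex appearing in a *different* block $w^i$; but the interleaving structure $x^0 w^1 x^1 \cdots$ with each $x^i$ a Bosonic $\alpha_i\beta_i^*$ (condition (3)) means the two copies of $w$ end up in the same Fermionic block after simplification, so the argument closes. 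Alternatively — and perhaps more simply — one can cite the Jacobson radical computation: every Fermionic vertex lies in a Fermionic (indeed left/right Fermionic is not needed here, $w$ itself is Fermionic as a length-one monomial with $s=r=w\in F$) monomial, hence $w \in J(SL_K(E))$, and a nonzero element of the Jacobson radical of any ring can never be von Neumann regular (if $x = xyx$ then $x(1-yx)=0$ with $yx$ quasi-regular, forcing $x=0$ in the unital case, and in the non-unital local-units setting one argues within a corner). I would present the Jacobson-radical argument as the primary one, since the machinery is already in place from the earlier theorem.
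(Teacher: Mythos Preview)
Your proposal is correct, and the Jacobson-radical route you ultimately settle on is genuinely different from the paper's argument. The paper proceeds directly: given a Fermionic vertex $w$, it observes that any $y$ with $wyw = w$ must be odd by parity, writes $y$ as a sum of monomials each of which is either a single Fermionic vertex or a left/right Fermionic path, and checks case by case that $w x_i w = 0$ using anticommutation and the Lemma on paths with Fermionic endpoints. Your primary argument instead invokes the Jacobson radical theorem proved earlier in the same section: $w$ is itself a Fermionic monomial, so $w \in J(SL_K(E))$, and no nonzero element of the Jacobson radical can be von Neumann regular (since $x = xyx$ would make $xy$ a nonzero idempotent in $J$, impossible as $J$ is quasi-regular). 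Your route is shorter, reuses machinery already in place, and sidesteps a case analysis of odd monomials that in the paper's version is not obviously exhaustive (odd monomials also include, for instance, a product of three Fermionic vertices, or a Bosonic monomial times a single Fermionic vertex). The direct argument you sketch first is close in spirit to the paper's approach but, as you yourself note, hazier in execution; pivoting to the Jacobson radical is the cleaner move. One small caveat: the Jacobson radical theorem is stated for connected $E$, while the present proposition carries no such hypothesis; you should either pass to the connected component containing $w$, or simply observe directly that the two-sided ideal generated by $w$ is nil, which suffices to place $w$ in $J$ without connectedness.
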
 

\begin{proof} 
Let us assume that $E$ has a Fermionic vertex $w$. Now $w$ is a homogeneous element of $SL_K(E)$. Assume that there exists $y \in SL_K(E)$ such that $wyw = w$. Now looking at the parity of both sides of this equation implies that $y$ must have parity $1$. This implies that $y = \sum_{i=1}^n x_i$ where each $x_i$ is a Fermionic vertex, a left Fermionic path or a right Fermionic path. 
\begin{itemize} 
\item {\bf Case 1:} Let $x_i = w' \in F$. Then $ww'w = - www' = 0 \neq w$, a contradiction. 
\item {\bf Case 2:} Let $x_i$ be a left Fermionic path $\alpha$. Then $w\alpha w = 0w = 0 \neq w$, a contradiction
\item {\bf Case 3:} Let $x_i$ be a right Fermionic path $\beta$. Then $w\beta w = w0 = 0 \neq w$, a contradiction. 
\end{itemize} 
Therefore there does not exist an element $y \in SL_K(E)$ such that $wyw = w$. This shows that if $E$ has Fermionic vertices, then $SL_K(E)$ is not von Neumann regular. Now, if $E$ has no Fermionic vertices, then $SL_K(E) = L_K(E)$, and so by \cite{AR} it follows that in this case $SL_K(E)$ is von Neumann regular if and only if $E$ is acyclic. Thus we have that the Leavitt path superalgebra $SL_K(E)$ is von Neumann regular if and only if $E$ has no Fermionic vertices and $E$ is acyclic. 
\end{proof} 
 
\noindent The next question to then consider is if $E$ has no Fermionic vertices (that is, $E$ is a completely Bosonic graph) is $SL_K(E) (= L_K(E))$, a graded von Neumann regular ring under the grading of $SL_K(E)$? This is not true in general, since under the grading of $SL_K(E)$, every element of $L_K(E)$ is homogeneous.

\bigskip

\section{Growth of Leavitt path superalgebras} 

\bigskip 

\noindent In \cite{AAJZ}, the authors obtained the Gr\"{o}bner-Shirsov basis for Leavitt path algebras and proved that the Leavitt path algebra $L_K(E)$ over a finite graph $E$ has polynomially bounded growth if and only if two distinct cycles of $E$ do not have a common vertex. Furthermore, they proved that if $d_1$ is the maximal length of a chain of cycles in $E$, and $d_2$ is the maximal length of a chain of cycles with an exit, then $GK dim(L_K(E)) = max(2d_1-1, 2d_2)$. Structure of Leavitt path algebras of polynomial growth has been further studied in \cite{AAJZ2}.

In this section we will obtain analogues of these results for Leavitt path superalgebras. Our arguments are very close adaptations of those in \cite{AAJZ}. We will be using the following generalization of the Diamond lemma from \cite{Bergman}. 

\begin{theorem}\cite{Bergman} \label{Diamond}
Let $S$ be a reduction system for a free associative algebra $k\langle X \rangle$, and $\leq$  a semigroup partial ordering on $\langle X \rangle$, compatible with $S$, and having descending chain condition. Then the following conditions are equivalent:
\begin{enumerate}[(a)] 
\item All ambiguities of $S$ are resolvable. 
\item All ambiguities of $S$ are resolvable relative to $\leq$. 
\item All elements of $k\langle X\rangle$ are reduction-unique under $S$. 
\item A set of representatives in $k\langle X\rangle$ for the elements of the algebra $R = k\langle X \rangle/ I$ determined by the generators $X$ and the relations $W_\sigma = f_ \sigma$ ($\sigma \in S$) is given by the k-submodule $k\langle X \rangle_{irr}$ spanned by the $S$-irreducible monomials of $\langle X \rangle$. 
\end{enumerate} 
\end{theorem}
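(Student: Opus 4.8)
This is Bergman's diamond lemma, quoted essentially verbatim from \cite{Bergman}, so the natural plan is to reproduce the skeleton of that proof rather than invent a new one. The four conditions are established as a cycle $(a)\Rightarrow(b)\Rightarrow(c)\Rightarrow(d)\Rightarrow(a)$, and everything rests on one standing remark: since $\leq$ is a semigroup partial order with descending chain condition and $f_\sigma<W_\sigma$ for each $\sigma$, a single reduction $AW_\sigma B\mapsto Af_\sigma B$ replaces the monomial $AW_\sigma B$ by monomials that are each strictly smaller, hence strictly decreases an element of $k\langle X\rangle$ in the multiset extension of $\leq$, which is again well-founded. Consequently every element of $k\langle X\rangle$ is reduction-finite, and one may argue by Noetherian induction over this order.

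The substantive implication is $(b)\Rightarrow(c)$. I would prove, by induction on a monomial $a$ in the well-founded order, that if every monomial strictly below $a$ is reduction-unique then so is $a$; along the way one checks that the reduction-unique elements form a $k$-submodule on which the ``pass to the reduced form'' map $r_S$ is $k$-linear. Given two one-step reductions $a\to a_1$ and $a\to a_2$, each $a_i$ is a linear combination of monomials strictly below $a$ and is therefore reduction-unique by the inductive hypothesis, so it suffices to produce a common reduct of $a_1$ and $a_2$. One splits into the three cases for the relative position of the two rewritten subwords of $a$: disjoint subwords (the two reductions essentially commute, modulo $r_S$-linearity), properly overlapping subwords (an overlap ambiguity), and one subword contained in the other (an inclusion ambiguity). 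In the latter two cases the hypothesis that the ambiguity is resolvable relative to $\leq$ supplies exactly the identity needed, and the relative-to-$\leq$ formulation is precisely the weakening that keeps this induction self-contained.

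The remaining implications are formal. For $(a)\Rightarrow(b)$ one observes that a genuine common reduct already witnesses resolvability relative to any compatible $\leq$, by compatibility. For $(c)\Rightarrow(d)$ I would show that $r_S$ is a $k$-linear idempotent with image $k\langle X\rangle_{irr}$ and kernel exactly the ideal $I$ generated by the elements $W_\sigma-f_\sigma$: the inclusion $I\subseteq\ker r_S$ holds because $AW_\sigma B\to Af_\sigma B$ is a reduction, and $\ker r_S\subseteq I$ because each reduction step changes an element only by a member of $I$, so $a-r_S(a)\in I$; hence $R=k\langle X\rangle/I\cong k\langle X\rangle_{irr}$ and the $S$-irreducible monomials form a $k$-basis. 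For $(d)\Rightarrow(a)$, given an ambiguity one reduces $f_\sigma C$ and $Af_\tau$ (possible by reduction-finiteness) to irreducible elements $u$ and $v$; both equal $ABC$ in $R$, so $u-v$ lies in $I\cap k\langle X\rangle_{irr}$, which is $0$ by $(d)$, whence $u=v$ is the required common reduct.

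The main obstacle is the bookkeeping hidden inside $(b)\Rightarrow(c)$: formulating ``resolvable relative to $\leq$'' correctly, and proving that $r_S$ is $k$-linear on reduction-unique elements, so that within a reduction sequence one may freely prepend and append words and pass to linear combinations, all without a circular appeal to the statement being proved. These are Bergman's Lemmas~1.1 and~1.2, which must be threaded through the Noetherian induction rather than proved beforehand.
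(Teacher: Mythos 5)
The paper does not prove this statement at all: it is quoted verbatim from Bergman's 1978 paper and used as a black box, so there is no in-paper proof to compare against. Your outline is a faithful skeleton of Bergman's original argument --- the cycle $(a)\Rightarrow(b)\Rightarrow(c)\Rightarrow(d)\Rightarrow(a)$, reduction-finiteness via the well-founded order, the Noetherian induction in $(b)\Rightarrow(c)$ split into disjoint/overlap/inclusion cases and threaded through Bergman's Lemmas~1.1--1.2, and the formal remaining implications --- and it correctly identifies where the real work lies.
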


\noindent For each vertex $v \in B$ which is not a sink, choose a Bosonic edge $\gamma(v)$ such that $s(\gamma(v)) = v$ and $r(\gamma(v)) \in B$. We will refer to this edge as special. In other words, we fix a function $\gamma: B \backslash \{\text{sinks}\} \rightarrow E$ such that $s(\gamma(v)) = v$ for arbitrary $v \in B \backslash \{\text{sinks}\}$. 

\begin{theorem}\label{Basis}
The following elements form a basis of the Leavitt path superalgebra $SL_K(E)$: 
\begin{enumerate} 
\item $v \in V = B \cup F$, 
\item $p$, $p^*$ where $p = e_1e_2 \cdots e_n$ is a path in $E$, with $r(e_i) = s(e_{i+1}) \in B$ for $i \in  \{1, ..., n-1\}$,
\item $pq^*$ where $p$, $q$ are paths as in $(ii)$ and $r(p) = r(q) \in B$ and the last edges in $p$ and $q$ are distinct or equal but not special, and
\item irreducible products of the elements above. 
\end{enumerate} 
\end{theorem}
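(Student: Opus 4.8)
The plan is to apply Bergman's Diamond Lemma (Theorem~\ref{Diamond}). By construction $SL_K(E)=K\langle X\rangle/I$, where $X=E^0\sqcup E^1\sqcup(E^1)^{*}$ and $I$ is the ideal generated by relations (1)--(5). First I would fix a well-order $\prec$ on $X$ in which all Fermionic vertices precede all Bosonic vertices, which precede $E^1$, which precedes $(E^1)^{*}$, and in which, for every Bosonic vertex $v$ at which relation (5) applies, the special edge $\gamma(v)$ is $\prec$-largest among the edges $e$ with $s(e)=v$ and $r(e)\in B$ (possible, since that set of edges is finite). Extending $\prec$ to $\langle X\rangle$ by comparing length first and then lexicographically yields a well-order on $\langle X\rangle$, hence a semigroup partial order with the descending chain condition.

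For the reduction system $S$ I would take a Gr\"{o}bner--Shirsov basis of $I$: for every pair $a,b\in X$ for which a relation $ab=\lambda c$ with $c\in X$ and $\lambda\in\{-1,0,1\}$ holds in $SL_K(E)$ --- by relations (1)--(4) and the first Lemma this is every pair except $\gamma(v)\gamma(v)^{*}$ and those $ab$ that remain irreducible --- the reduction $ab\mapsto\lambda c$; and, at each Bosonic $v$ where (5) applies, the Cuntz--Krieger reduction
\[ \gamma(v)\gamma(v)^{*}\;\longmapsto\; v\;-\!\!\!\sum_{\substack{e\,:\ s(e)=v,\ r(e)\in B\\ e\neq\gamma(v)}}\!\!\!ee^{*}. \]
One checks that $S$ generates $I$ (its relations lie in $I$ and, after orienting, include (1)--(5)) and that every right-hand side is $\prec$ its leading monomial; the only non-obvious point is the displayed reduction, where $v$ has length one and each $ee^{*}$, beginning with $e\prec\gamma(v)$, is $\prec\gamma(v)\gamma(v)^{*}$.

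The core of the proof is the verification that all ambiguities of $S$ are resolvable. Since every leading monomial has length two there are no inclusion ambiguities, and the overlap ambiguities are exactly the length-three words $xyz$ with $xy$ and $yz$ leading monomials. I would split these into: (a)~overlaps among three vertices $v_1v_2v_3$, which resolve as in a Grassmann algebra and encode the associativity and sign-coherence of the $\mathbb{Z}_2$-graded commutative algebra on $E^0$; (b)~overlaps with a vertex in the middle, such as $e\,v\,e'$ (with $v=r(e)=s(e')$) or $v_1\,v_2\,e$, each side reducing in one step to the same monomial; (c)~overlaps among edge/ghost reductions not involving a special edge, such as $r(e)e^{*}e$, $e^{*}e\,r(e)$, $s(e)e\,r(e)$ and the remaining three-letter overlaps of CK1 with the absorptions --- these are exactly the computations of \cite{AAJZ} for $L_K(E)$, the new Fermionic and mixed relations merely collapsing some outcomes to $0$; and (d)~the overlaps involving a special edge and CK2: $\gamma(v)\gamma(v)^{*}\gamma(v)$, and $v\,\gamma(v)\gamma(v)^{*}$, $\gamma(v)\gamma(v)^{*}v$ with $v=s(\gamma(v))$, and $\gamma(v)\gamma(v)^{*}e$ with $e$ Bosonic and $s(e)=v$. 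Using $\gamma(v)^{*}\gamma(v)=r(\gamma(v))$, $e^{*}\gamma(v)=0$ for $e\neq\gamma(v)$ (relation (4)), and $s(\gamma(v))\gamma(v)=\gamma(v)$, one checks that the two reduction paths of each such word agree. Note also that CK2 has no overlap with any Fermionic-vertex relation, since $\gamma(v)^{*}w$ is already $S$-irreducible for $w\in F$.

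Granting resolvability, Theorem~\ref{Diamond} gives that the $S$-irreducible monomials of $\langle X\rangle$ form a $K$-basis of $SL_K(E)$; it then remains to match them with (i)--(iv). By Theorem~\ref{Monomials} every monomial is, up to sign, of the shape $x^{0}w^{1}x^{1}\cdots w^{n}x^{n}$ with the $x^{i}$ of the form $\alpha_i\beta_i^{*}$ and each $w^{i}$ a product of distinct Fermionic vertices; inspecting the length-two leading monomials of $S$ shows that such a word is $S$-irreducible precisely when every $w^{i}$ is $\prec$-increasing and, inside each $x^{i}=\alpha_i\beta_i^{*}$ with $r(\alpha_i)=r(\beta_i)\in B$, the last edges of $\alpha_i$ and $\beta_i$ are distinct or equal but not special --- which is exactly condition (iii). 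Single letters give (i) and the $n=1$ cases of (ii), a single $\alpha\beta^{*}$ of the above form gives (ii) and (iii), and a genuine alternating word gives (iv). The step I expect to be the main obstacle is (d): CK2 is the one reduction that is neither monomial nor length-decreasing, so the whole scheme rests on choosing $\prec$ with $\gamma(v)$ maximal at its source, and on the resolution of CK2's self-overlap and of its overlaps with CK1 and the vertex absorptions --- the delicate point already present for ordinary Leavitt path algebras in \cite{AAJZ} --- whereas the Fermionic superstructure only contributes the routine ambiguities (a)--(c).
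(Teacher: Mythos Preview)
Your approach is essentially that of the paper: both set up a compatible length--lexicographic order on $X=E^{0}\cup E^{1}\cup(E^{1})^{*}$, rewrite the defining relations as a reduction system with the CK2 relation oriented via the special edge $\gamma(v)$, and then invoke Bergman's Diamond Lemma. The paper in fact orders $B<F$ (the reverse of your $F\prec B$), but this only mirrors the resulting normal forms and does not change the method; your case-by-case ambiguity check (a)--(d) is more detailed than the paper's, which simply asserts that all ambiguities of $S$ are resolvable. One small slip to fix: your description ``$ab=\lambda c$ with $c\in X$'' does not literally cover the Grassmann rewritings $w_iw_j\to -w_jw_i$ and the Bosonic--Fermionic commutation (whose right-hand sides are length-two words, not single letters), although your use of them in case~(a) and in the final ``$\prec$-increasing'' clause makes clear that you intend to include them in $S$.
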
 

\begin{proof}
We first introduce a well-ordering of generators of $SL_K(E)$. First, we order the elements of $v_i \in B \subset E^0$ and $w_i \in F \subset E^0$, such that $v_i < v_j$ if $i < j$ and $w_i < w_j$ if $i < j$. We further order the vertices as $w > v$ if  $w \in F$ and $v \in B$. Let $e, f \in E^1$ then $e < f$ if $s(e) < s(f)$.  Now we order the edges coming out of the same Bosonic vertex, if $v \in B$, let $\{e_1, ..., e_k : s(e_i) = v, r(e_i) \in B$ with $e_k = \gamma(v)\}$ and let $\{f_1, ..., f_k: s(f_i) = v, r(f_i) \in F \}$ then $e_1 < e_2 < \cdots < e_k = \gamma(v) < f_1 < f_2 < \cdots < f_k$. We can then arbitrarily order the edges $\{ f_i: s(f_i) \in F\}$. Also, if $e^*, f^* \in (E^1)^*$ then $e^* < f^*$ if $e < f$. Finally, $v < w < e < f^*$ if $v \in B$, $w \in F$, $e \in E^1$ and $f^* \in (E^1)^*$. We then must modify our relations so that the relations are compatible with our well ordering that is if $W_\sigma = f_\sigma$ is a relation, then $W_\sigma > f_\sigma$.  
\begin{enumerate}
\item \begin{enumerate} 
\item $v_iv_j = \delta_{ij} v_i$ for $v_i, v_j \in B$. 
\item $w_iw_j = - w_jw_i$ for $i>j$ $w_i, w_j \in F$. 
\item $wv = vw$, for $w \in F$ and $v \in B$. 
\end{enumerate} 
\item \begin{enumerate}
\item $s(e)e = e$ if $s(e) \in B$, $r(e)e^* = e^*$ if $r(e) \in B$ 
\item $s(e)e =0$ if $s(e) \in F$ , $r(e)e^* = 0$ if $r(e) \in F$
\item $er(e) = e$ if $r(e) \in B$, $e^*s(e) = e^*$ if $s(e) \in B$
\item $er(e) = 0$ if $r(e) \in F$, $e^*s(e) = 0$ if $s(e) \in F$	
\end{enumerate} 
\item \begin{enumerate} 
\item $e^*e = r(e)$ if $r(e), s(e) \in B$.
\item $e^*e = 0$ if $r(e) \in F$ or $s(e) \in F$.  
\item $e^*f = 0$ if $e \neq f$ with both $e$ and $f$ being Bosonic edges
\end{enumerate}
\item If $v \in B$ is any regular vertex, then 
$\gamma(v)\gamma(v)^* = v - \sum_{e \in E^1: s(e) = v, r(e) \in B, e \neq \gamma(v)} ee^*$ 
\end{enumerate} 

Note that the only changes in these were in (1) and (4). We also add in a relation that is compatible with the relations above which helps simplify elements.   
\begin{enumerate}
\item[(5)] If $v \neq s(e)$ then $ve =0$ and $e^*v = 0$. If $v \neq r(e)$ then $ev = 0$ and $e^*v = 0$. 
\end{enumerate} 

\noindent We let $S$ be the modified relations for $SL_K(E)$ shown above and $X = E^0 \cup E^1 \cup (E^1)^*$. The well-ordering on $\langle X \rangle$ is compatible with $S$ and has the descending chain condition, that is, every set of elements has a smallest element. Note that by the definition of $S$ above, all ambiguities of $S$ are resolvable and therefore the condition (d) of Theorem \ref{Diamond} gives us that $SL_K(E) = K \langle X \rangle / I$ is given by the $k$-submodule $k\langle X \rangle$ spanned by the $S$-irreducible monomials of $\langle X \rangle$. Hence, we conclude that the basis of $SL_K(E)$ consists precisely of the irreducible elements stated above. 
\end{proof} 

\noindent Now, in order to characterize the polynomially bounded growth Leavitt path superalgebra $SL_K(E)$ over a finite graph $E$, we begin with a lemma. Before stating the lemma, let us fix a notation which we will frequently use. For any cycle $C$, we will denote by $V(C)$, the set of all vertices appearing in the cycle $C$.  

\begin{lemma} 
If two distinct Bosonic cycles have a common vertex, then the Leavitt path superalgebra $SL_K(E)$ has exponential growth. 
\end{lemma}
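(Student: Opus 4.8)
The plan is to exhibit a subalgebra of $SL_K(E)$ that already contains a free subalgebra on two generators, or at least grows exponentially, and is built entirely from Bosonic generators so that the superalgebra relations do not interfere. Concretely, suppose $C_1$ and $C_2$ are distinct Bosonic cycles sharing a common vertex $v$. Since $v \in B$ and all vertices on $C_1,C_2$ are Bosonic, the relevant portion of $SL_K(E)$ coincides with the corresponding portion of the ordinary Leavitt path algebra $L_K(E)$; all the Fermionic relations (1)(ii), (1)(iii), (2)(ii)--(iv), etc.\ are vacuous on these generators. So it suffices to invoke, or reprove, the classical fact for Leavitt path algebras that two distinct cycles through a common vertex force exponential growth (this is exactly the ``only if'' direction of the growth dichotomy of \cite{AAJZ}).

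First I would set up notation: write $C_1 = e_1 e_2 \cdots e_s$ and $C_2 = g_1 g_2 \cdots g_t$ as closed Bosonic paths based at $v$, with $e_1 \ne g_1$ (if they agreed on an initial segment, reduce to the first edge where they differ, which is again a common vertex of two distinct cycles). Set $a = C_1$ and $b = C_2$, two Bosonic closed paths at $v$ with distinct first edges. Then I claim the monomials in $a$ and $b$ that begin with $a$ or $b$ and end at $v$ are $K$-linearly independent among the basis elements of Theorem~\ref{Basis}: a product $x_1 x_2 \cdots x_n$ with each $x_i \in \{a,b\}$ is already a path in $E$ starting and ending at $v$, hence is (after possibly reducing using the special-edge convention, which only affects a bounded amount) a basis element of type (ii), and distinct words give distinct reduced paths because at vertex $v$ the continuation is determined by whether the next edge is $e_1$ or $g_1$. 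Hence the number of elements of $SL_K(E)$ of length $\le nm$ (where $m = \max(s,t)$) is at least $2^n$, which is exponential in the length, so $SL_K(E)$ has exponential growth.

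The main obstacle is making the linear independence argument fully rigorous in the presence of the Cuntz--Krieger relation and the reduction to a Gröbner--Shirsov normal form. Specifically, one must check that no sequence of reductions from Theorem~\ref{Basis} collapses or identifies two distinct words $w(a,b)$ and $w'(a,b)$; the cleanest route is to observe that since $a,b$ are genuine paths (no ghost edges), each word $w(a,b)$ is literally a path in $E$, and two distinct paths in $E$ are distinct basis elements of type (ii) unless the special-edge condition in (iii)/(4) forces a rewrite --- but relation (4) only rewrites a product $\gamma(v)\gamma(v)^*$, which never occurs inside a purely forward path, so words in $a,b$ are already irreducible (type (ii)) basis elements. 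One should also address the harmless case $e_1$ or $g_1$ being special: the special-edge bookkeeping only reindexes the generating set and does not change the count $2^n$.

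Finally I would record the growth estimate in the standard form. Fix the finite generating set $X = E^0 \cup E^1 \cup (E^1)^*$ of $SL_K(E)$ and let $d_X(n)$ denote the dimension of the span of monomials of length $\le n$. The $2^n$ distinct basis paths $w(a,b)$ of $E$-length $\le nm$ all lie in the span of monomials of $X$-length $\le nm$, so $d_X(nm) \ge 2^n$, whence $\limsup_{n\to\infty} \frac{\log d_X(n)}{n} \ge \frac{\log 2}{m} > 0$, i.e.\ $SL_K(E)$ has exponential growth and in particular is not of polynomial growth. This completes the proof.
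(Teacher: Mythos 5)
Your proof is correct and takes essentially the same approach as the paper's: both exhibit the two Bosonic cycle paths based at the common vertex and argue that they generate a free, hence exponentially growing, subalgebra of $SL_K(E)$. The paper merely asserts freeness, whereas you justify the linear independence via the basis theorem and the reduction to distinct first edges; this is a welcome refinement of detail but not a different argument.
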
 

\begin{proof}
Let $C_1$ and $C_2$ be two distinct Bosonic cycles in $E$ such that $V(C_1) \cap V(C_2) \neq \emptyset$. Let $C_1$ be the cycle $v_1 v_2 \ldots v_m v_1$ where for $1\le i\le m-1$, $e_i$ is the arrow from $v_i$ to $v_{i+1}$ and $e_m$ is the arrow from $v_m$ to $v_1$. Similarly, let $C_2$ be the cycle $w_1 w_2 \ldots w_n w_1$ where for $1\le i\le n-1$, $f_i$ is the arrow from $w_i$ to $w_{i+1}$ and $f_n$ is the arrow from $w_n$ to $w_1$. Let $p = e_1e_2\cdots e_m$ and $q = f_1f_2 \cdots f_n$ then $p$ and $q$ generate a free subalgebra in $SL_K(E)$. Therefore $SL_K(E)$ has exponential growth. 
\end{proof} 

For two cycles $C'$ and $C''$, we write $C' \Rightarrow C''$, if there exists a Bosonic path that starts in $C'$ and ends in $C''$. A sequence of distinct Bosonic cycles $C_1, ..., C_k$ is called a chain of length $k$ if $C_1 \Rightarrow C_2 \Rightarrow \cdots \Rightarrow C_k$. The chain is said to have an exit if the cycle $C_k$ has an exit. Let $d_1$ be the maximal length of a chain of cycles in $E$, and let $d_2$ be the maximal length of a chain of cycles with an exit. Clearly $d_2 \leq d_1$. 

\begin{theorem}
Let $E$ be a finite graph. Then the Leavitt path superalgebra $SL_K(E)$ has a polynomial growth if and only if two distinct Bosonic cycles of $E$ do not have  a common vertex. 
\end{theorem}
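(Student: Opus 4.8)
The plan is to derive the statement from the Lemma immediately preceding it, the basis of Theorem~\ref{Basis}, and the Leavitt path algebra result of \cite{AAJZ}. One implication is already available: by the preceding Lemma, if two distinct Bosonic cycles of $E$ share a vertex then the two cycle paths $p,q$ generate a free subalgebra, so $SL_K(E)$ has exponential growth. Hence it remains to prove that if no two distinct Bosonic cycles of $E$ share a vertex, then $SL_K(E)$ has polynomial growth.

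Take the finite generating set $X = E^0 \cup E^1 \cup (E^1)^*$ and let $V_n$ be the span of all products of at most $n$ elements of $X$. Every rule $W_\sigma \to f_\sigma$ in the reduction system used in the proof of Theorem~\ref{Basis} satisfies $l(f_\sigma) \le l(W_\sigma)$, so a reduction never lengthens a word; thus $V_n$ is contained in the span of the $S$-irreducible monomials of length $\le n$, and it suffices to bound the number of such basis monomials polynomially in $n$.

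The next step is to read off the shape of basis monomials from Theorem~\ref{Monomials}, the Lemma of Section~3 on products with Fermionic edges, and the auxiliary relation $(5)$ of the proof of Theorem~\ref{Basis}. Two observations do the work. \emph{First}, an edge or a ghost edge multiplied on either side by a Fermionic vertex is $0$ (a Fermionic vertex never matches the source or range of a Bosonic edge, and the mixed and Fermionic cases are covered by the Lemma of Section~3); hence a nonzero basis monomial that involves any Fermionic vertex contains no edges at all, so it reduces to a product of vertices, and the span of all such monomials is finite-dimensional, of dimension at most $(|B|+1)2^{|F|}$, independently of $n$. \emph{Second}, again by the Lemma of Section~3, a Fermionic edge or its ghost occurs only as a length-one monomial, a left Fermionic edge (resp. a right Fermionic ghost edge) can occur only at the extreme left of a word, and a left Fermionic ghost edge (resp. a right Fermionic edge) only at the extreme right. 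Consequently every remaining nonzero basis monomial is a Bosonic basis monomial, possibly with one mixed edge prepended and/or one mixed edge appended. Writing $E_B$ for the full subgraph of $E$ on the Bosonic vertices, the Bosonic basis monomials of $SL_K(E)$ are (for a compatible choice of special edges) among the basis monomials of $L_K(E_B)$, so, with $b(n)$ the number of basis monomials of $L_K(E_B)$ of length $\le n$,
\[ \dim_K V_n \ \le\ (|B|+1)2^{|F|} \;+\; 2\,|E^1| \;+\; (|E^1|+1)^2\, b(n). \]

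Finally, the cycles of $E_B$ are exactly the Bosonic cycles of $E$, so the hypothesis says no two distinct cycles of $E_B$ share a vertex; by \cite{AAJZ} the Leavitt path algebra $L_K(E_B)$ then has polynomial growth, with $b(n) = O\!\left(n^{\max(2d_1-1,\,2d_2)}\right)$. Substituting into the displayed inequality yields polynomial growth of $SL_K(E)$, which completes the proof. I expect the real work — and the only subtle point — to be the classification in the third paragraph: verifying from the defining relations that Fermionic vertices annihilate adjacent edges and that mixed and Fermionic edges are confined to the ends of words, i.e. that the genuinely "super" features of $SL_K(E)$ add only a bounded amount to its growth; everything past that point is inherited from \cite{AAJZ}.
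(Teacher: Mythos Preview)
Your handling of one direction via the preceding Lemma is fine, as is the observation that reductions never lengthen words. The gap is in the first of your two ``observations'': it is \emph{not} true that a Fermionic vertex annihilates an adjacent Bosonic edge. For $w\in F$ and a Bosonic edge $e$ the defining relations give only $we=(ws(e))e=(s(e)w)e=s(e)\cdot we$, an identity, not a reduction to zero; the auxiliary relation~(5) you invoke from the proof of Theorem~\ref{Basis} is a \emph{derived} rule, and for $v\in F$ it is not derivable (since $vs(e)=s(e)v\neq 0$ by~(1)(iii)). Theorem~\ref{Monomials} explicitly produces nonzero monomials $x^0w^1x^1\cdots w^nx^n$ with Fermionic vertices sandwiched between Bosonic segments $x^i=\alpha_i\beta_i^*$, and these are precisely the type~(5) elements in the paper's own proof of the present theorem. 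Concretely, for a single Bosonic loop $e$ at $v$ and any $w\in F$, every word $e^iwe^j$ is a nonzero basis element, so your constant bound $(|B|+1)2^{|F|}$ for ``monomials containing a Fermionic vertex'' is a drastic undercount and the displayed inequality for $\dim_K V_n$ fails.

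The paper does not reduce to $L_K(E_B)$; it counts directly, and the substantive step beyond \cite{AAJZ} is exactly the type~(5) estimate, bounding the number of interleaved products $x^1w^1x^2\cdots w^lx^{l+1}$ of length $\le n$ by a polynomial in $n$. Your treatment of mixed and Fermionic \emph{edges} (confined to the extremities of a word by the Lemma of Section~3) is correct and agrees with the paper, but it does not control the Fermionic \emph{vertices}, which may occur anywhere inside a Bosonic word. Fixing your approach would require either a commutation relation moving Fermionic vertices past Bosonic edges (none is available) or a direct count of the interleaved monomials, which is what the paper does.
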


\begin{proof} 
We assume that no two distinct Bosonic cycles have a common vertex and we note that the generating set of $SL_K(E)$ is $X = E^0 \cup E^1 \cup (E^1)^*$. Let $E'$ be the set of edges that do not belong to any cycle. Let $P'$ be the set of all non-fermionic paths that are composed of edges from $E'$. Then an arbitrary path from $P'$ never arrives to the same vertex twice. Hence, $|P'| < \infty$. By the basis theorem above, the space $Span(X^n)$ is spanned by the elements of the following types: 
\begin{enumerate} 
\item a vertex $v \in B$, or $w^i = w_{i,1} \cdots w_{i,j}$ such that each $w_{i,k} \in F$ and $w_{i,k} \neq w_{i,l}$ if $k \neq l$. 
\item a path $p = p'_1p_1 p_2'p_2p_k p'_{k+1}$ where $p_i$ is a path on a Bosonic cycle $C_i$, $1 \leq i \leq k$, $C_1 \Rightarrow \cdots \Rightarrow C_k$ is a chain $p_i' \in P'$, $p_i'$ is a Bosonic path, and $length(p) \leq n$.  
\item $p^*$, where $p$ is a path of the type $(2)$, 
\item $pq^*$, where $p = p_1'p_1p_2'p_2 \cdots p_k p_{k+1}'$, $q = q_1'q_1q_2'\cdots q_2q_{s+1}$; $p_i$, $q_j$ are paths on Bosonic cycles $C_i$, $D_j$ repectively and $C_1 \Rightarrow \cdots C_k$, $D_1 \Rightarrow \cdots \Rightarrow D_s$ are chains; $p_i', q_j' \in P'$, $length(p) + length(q) \leq n$ with $r(p) = r(q)$. 
\item $x^1w^1x^2 \cdots w^l x^{l+1}$ where $x^i$ looks like $(2), (3), (4)$ above and $w^i =w_{i,1}w_{i,2}\cdots w_{i,j}$ where each $w_{i,k}$ is distinct and $r(x_i) = s(x_{i+1})$. \\
\end{enumerate} 

\noindent We first look at the vertices in $(1)$. Now for $v \in B$ we clearly have $n^0$ growth since our graph $E$ is finite. Now we consider the strings of Fermionic vertices $w^j$. Let $w^j$ have length $n$ then we can write $w^j$ in $|F|(|F|-1) \cdots (|F| - (n-1))$ ways which is a polynomial in $n$ with some degree $l_j$. Therefore these strings have polynomial growth $n^{l_j}$.  

\noindent We next estimate the number of paths of length $n$ of the type $(2)$. First, fix a chain $C_1 \Rightarrow C_2 \Rightarrow \cdots \Rightarrow C_k$. Let $C =v_1 v_2 \ldots v_m v_1$ be a cycle where for $1\le i\le m-1$, $e_i$ is the arrow from $v_i$ to $v_{i+1}$ and $e_m$ is the arrow from $v_m$ to $v_1$. Let $P_C= e_1 \cdots e_m$. For a given cycles there are $m$ such paths depending upon the choice of starting point $v_1$.  \\

\noindent Let $|V(C_i)| = m_i$ and let $P_{C_i}$ be any one of the $m_i$ paths described above. Then an arbitrary path on $C_i$ can be represented as $u' P_{c_i}^l u''$ where $length(u'), length(u'') \leq m_i -1$. Hence every path of the type $(2)$ which corresponds to the chain $C_1 \Rightarrow C_2 \Rightarrow \cdots \Rightarrow C_k$ can be represented as $p'_1 u'_1 P_{C_1}^{l_1} u_1'' \cdots p_k'u'_k P_{C_k}^{l_k} u_k'' p'_{k+1}$, where $p_i' \in P'$ and $length(u_i'), length(u_i'') \leq m_i -1$. Clearly, $m_1l_1 + \cdots + m_k l_k \leq n$. We show that the number of such paths is $\preceq n^k \leq n^{d_1}$ by induction. Suppose $k = 1$, then $m_1l_1 \leq n = n^1$ and we are done. Assume that for a cycle of length $k-1$ there are $n^{k-1}$ choices for paths of length $n$ on the chain. Now consider a chain of length $k$, there are $n^{k-1}$ choices for paths of length $n$ on the first $k-1$ cycles and there are $m_1l_1 + m_2 l_2 + \cdots m_kl_k$ choices for paths using all $k$ cycles in the chain therefore the number of choices for paths of length $n$ on a chain of length $k$ are 
\begin{align*}
&\preceq n^{k-1} + m_1l_1 + m_2l_2 + \cdots m_kl_k \\
& \leq n^{k-1} + n \\
& \leq n^k
\end{align*}  
Therefore the number of paths of length $n$ on a chain of length $k$ is $\preceq n^k \leq n^{d_1}$. Now the number of elements of type $(3)$ is the same as in type $(2)$. Therefore the elements of type $(2)$ and type $(3)$ have polynomial growth.  

Now we consider elements of type $(4)$. Consider elements of length $\leq n$ of the type $pq^*$, $r(p) = r(q) \in B$; the path $p$ passes through the cycles of the chain $C_1 \Rightarrow C_2 \Rightarrow \cdots \Rightarrow C_k$ on the way, the path $q$ passes through the cycles of the chain $D_1 \Rightarrow D_2 \Rightarrow \cdots \Rightarrow D_s$ on the way. So $p = p_1'p_2p_2' \cdots p_k p'_{k+1}$ where $p_i' \in P_i'$, each $p_i$ is a paht on the cycle $C_i$. Similarly, $q = q_1'q_1 q_2' \cdots q_sq'_{s+1}$. Arguing as above, we see that for fixed chains $C_1 \Rightarrow C_2 \Rightarrow \cdots \Rightarrow C_k$ and $D_1 \Rightarrow D_2 \Rightarrow \cdots \Rightarrow D_s$, the number of such paths is $\preceq n^{k+s} \leq n^{2d_1}$. Therefore, elements of type $(4)$ have polynomial growth.

Finally we consider elements of type $(5)$, let $x = x^0 w^1 x^1 w^2 x^2 \cdots x^{l-1}w^l x^l$ be an element of length $ \leq n$. Let each $x^i$ have length $n_i$. Then by the above, $x^i$ can be written in $n_i^{k_i}$ ways where $k_i$ is an integer. Now let each $w^j$ have length $m_j$. Also by above $w^j$ can be written in $m_j ^{l_j}$ ways. Now $x$ has length $ \leq n$ so that $\sum n_i + \sum m_j  \leq n$, in particular $n_i \leq n$ and $m_j \leq n$. Therefore the number of ways to write $x$ with length $ \leq n$ is $ < \Pi_i n_i^{d_i} \Pi_j m_j^{l_j}$. This is a polynomial in $n_i$ and $m_j$ with degree $\sum_i k_i + \sum_j l_j$, therefore the growth of $x$ can be expressed as $n^{\sum d_i + \sum d_j}$ and is a polynomial. This shows that every element of $SL_K(E)$ has a polynomial growth and hence we conclude that $SL_K(E)$ has a polynomial growth. 
\end{proof}

\bigskip

\noindent {\bf Acknowledgement.} The authors would like to thank Michele Cirafici and K. M. Rangaswamy for many helpful comments and suggestions.   

\bigskip

\bigskip


\bigskip

\bigskip

\end{document}